\newcommand{\be}{\begin{equation}}
\newcommand{\ee}{\end{equation}}
\newcommand{\cn}{{\rm \,cn}}
\newcommand{\sn}{{\rm \,sn}}
\newcommand{\dn}{{\rm \,dn}}
\newcommand{\sech}{{\rm \,sech}}
\numberwithin{equation}{section}
\numberwithin{figure}{section}
\newtheorem{theorem}{Theorem}[section]
\newtheorem{proposition}[theorem]{Proposition}
\newtheorem{lemma}[theorem]{Lemma}
\newtheorem{corollary}[theorem]{Corollary}
\newtheorem{definition}[theorem]{Definition}
\begin{document}

\vglue-1cm \hskip1cm
\title[Spectral Stability for Double Dispersion Equation]{Spectral Stability of Periodic Traveling Wave Solutions for a Double Dispersion Equation}

\begin{center}

\subjclass[2000]{76B25, 35Q51, 35Q53.}

\keywords{spectral stability, double dispersion equation, periodic waves}

\maketitle

{\bf F\'abio Natali}

{Departamento de Matem\'atica - Universidade Estadual de Maring\'a\\
Avenida Colombo, 5790, CEP 87020-900, Maring\'a, PR, Brazil.}\\
{ fmanatali@uem.br}

\vspace{3mm}

{\bf Thiago Pinguello de Andrade }

{Departamento de Matem\'atica - Universidade Tecnol\'ogica Federal do Paran\'a\\
Av. Alberto Corazzai, 1640, CEP 86300-000, Corn\'elio Proc\'opio, PR, Brazil.}\\
{thiagoandrade@utfpr.edu.br}

\end{center}

\begin{abstract}
In this paper, we investigate the spectral stability of periodic traveling waves for a cubic-quintic and double dispersion equation. Using the quadrature method we find explict periodic waves and we also present a characterization for all positive and periodic solutions for the model using the monotonicity of the period map in terms of the energy levels. The monotonicity of the period map is also useful to obtain the quantity and multiplicity of non-positive eigenvalues for the associated linearized operator and to do so, we use tools of the Floquet theory. Finally, we prove the spectral stability by analysing the difference between the number of negative eigenvalues of a convenient linear operator restricted to the space constituted by zero-mean periodic functions and the number of negative eigenvalues of the matrix formed by the tangent space associated to the low order conserved quantities of the evolution model.   \end{abstract}

\section{Introduction.}

Our aim is to prove existence and spectral stability of periodic traveling wave solution for a cubic-quintic and double dispersion equation (DDE henceforth) given by
\begin{equation}
\label{ddequation}
u_{tt}-u_{xx}+h_1u_{xxxx}-h_2u_{ttxx}+(f(u))_{xx}=0,
\end{equation}
where $u=u(x,t)$, with $(x,t)\in \mathbb{R}\times \mathbb{R}^+$, $h_1$ and $h_2$ are positive real constants and
$$f(u)=au^{p+1}+bu^{2p+1},$$
with $p>0$ and $a,b \in \mathbb{R}\backslash\{0\}$. We are interested in the special case $p=2$ and $a=b=1$. We also assume that $h_1=5$ and $h_2=4$ for technical reasons. All of those specific cases enable us to consider equation \eqref{ddequation} as
\begin{equation}
\label{ddeparticular}
u_{tt}-u_{xx}+5u_{xxxx}-4u_{ttxx}+(u^3+u^5)_{xx}=0.
\end{equation}
\indent The evolution equation $(\ref{ddeparticular})$ does not enjoy of a scaling invariance in order to determine a sharp Sobolev space to obtain the existence of a global well-posedness result in time and the reason for that is because of the presence of the double dispersion term $5u_{xxxx}-4u_{xxtt}$. Moreover, it can be reduced in the following nonlinear system as
\begin{equation}\label{sysdde}
	\left\{\begin{array}{llll}u_t=v_x,\\
		v_t=(I-4\partial_x^2)^{-1}\partial_x((I-5\partial_x^2)u-(u^3+u^5)),
	\end{array}\right.\end{equation}
which gives us the natural conserved quantities (energy and momentum, respectively) given by 
\begin{equation}\label{E}
	E(u,v)=\frac{1}{2}\int_0^L\left(u^2+5u_x^2+v^2+4v_x^2-\frac{1}{2}u^4-\frac{1}{3}u^6\right)dx,
\end{equation}
and
\begin{equation}\label{F}
	F(u,v)=\int_0^L(uv+4u_xv_x)dx,
\end{equation}
where $(u,v):=\left(\begin{array}{lll}u\\ v\end{array}\right)$. From system $(\ref{sysdde})$, we also have the following conserved quantities 
\begin{equation}\label{G}
	G(u,v)=\int_0^Ludx,
\end{equation}
and 
\begin{equation}\label{H}
	H(u,v)=\int_0^Lvdx.
\end{equation}

In addition, the nonlinear system $(\ref{sysdde})$ can be reduced in an abstract Hamiltonian system as
\begin{equation}\label{ham}
			\left(\begin{array}{ll}u\\ v\end{array}\right)_t=J\left(\begin{array}{ll} \partial_u E \\ \partial_v E \end{array}\right),
\end{equation}
where $\partial_u E$ and $\partial_v E$ denote the Fr\'echet derivative of $E$ with respect to $u$ and $v$, respectively. Operator $J$ is defined as $J=(I-4\partial_x^2)^{-1}\left(\begin{array}{lll}0\ \ \partial_{x}\\
\partial_{x}\ \ 0\end{array}\right)$.\\
\indent Because of the natural translation invariance present in the equation $(\ref{sysdde})$, we can determine traveling wave solutions of the form $(u(x,t),v(x,t))=(\phi(x-ct),\psi(x-ct))$, where $\phi, \psi:\mathbb{R}\rightarrow \mathbb{R}$ are real-valued functions which are $L-$periodic at the space variable and $c$ is called the wave-speed of the periodic solution which is assumed to be positive. Replacing this kind of solution into equation \eqref{sysdde}, we see that the pair $(\phi,\psi)$ must satisfy the following system of ODE,
\begin{equation}\label{sysdde-ode}
	\left\{\begin{array}{lllll}-c\phi'=\psi'\\
		-c(\psi'-4\psi''')=\phi'-5\phi'''-(\phi^3+\phi^5)',
	\end{array}\right.\end{equation}
\indent System $(\ref{sysdde-ode})$ can be reduced into a single equation as
$$c^2\phi'-\phi'+5\phi'''-4c^2\phi '''+(\phi^3+\phi^5)'=0.$$
Integrating the last ODE, we obtain
$$(c^2-1)\phi+(5-4c^2)\phi''+\phi^3+\phi^5-A=0,$$
where $A$ is a constant of integration. In order to simplify the notation, we consider $r=1-c^2$ and $s=5-4c^2$. Assuming $s\neq0$, one has 
\begin{equation}
\label{ddedo}
-\phi''+\frac{r}{s}\phi -\frac{1}{s}(\phi^3+\phi^5)+\frac{A}{s}=0.
\end{equation}
Equation $(\ref{ddedo})$ can be seen as a two-parameter function depending on the pair $(c,A)$. In order to obtain explicit periodic waves $(\phi,\psi)$ that solve $(\ref{sysdde-ode})$, we are going to assume $c\in (0,1)$ and $A=0$. Thus, we have the final equation 
\begin{equation}
\label{ddedo1}
-\phi''+\frac{r}{s}\phi -\frac{1}{s}(\phi^3+\phi^5)=0.
\end{equation}

\indent The periodic traveling solution $\overrightarrow{\phi_c}=(\phi,-c\phi)$ which solves equation $(\ref{sysdde-ode})$ arises as a critical point of the augmented energy functional $D_{c}$ defined as $D_c(u,v)=E(u,v)+cF(u,v)$. It is well known that the spectral stability of $\overrightarrow{\phi_c}=(\phi,-c\phi)$ can be determined by minimizing the functional $E$ under fixed constraints $F$, $G$ and $H$. Therefore, since $\overrightarrow{\phi_c}$ is a critical point of $D_c$, it is suitable to think that the spectral stability can be determined by proving that the second derivative of $D_c$ at the point $\overrightarrow{\phi_c}$, and denoted by 
\begin{equation}\label{operator}\mathcal{D}:=D_c''(\overrightarrow{\phi_c})=\left(\begin{array}{cc} -5\frac{\partial^2}{\partial x^2}+1-3\phi^2-5\phi^4 \,\ \,\ & -4c\frac{\partial^2}{\partial x^2} +c\\\\
	-4c\frac{\partial^2}{\partial x^2} +c&  -4\frac{\partial^2}{\partial x^2} +1\end{array}\right)\end{equation}
is strictly positive over the set $\{(-c4\phi''+c\phi,4\phi''-\phi),(1,0),(0,1)\}^{\bot}$.\\
\indent Without further ado, we shall present the basic setting to prove our spectral stability result. First, let us consider the existence of a time $T>0$ such that the pair of solutions $(u,v)$ of $(\ref{sysdde})$ is an element of $C([0,T),\mathbb{H}_{\text{per}}^1)$ (see Lemma $\ref{cauchysysdde}$). Next, let us consider the change of variables:
\begin{equation}\label{changevar1}
	p(x,t)=u(x+ct,t)-\phi(x)\ \ \ \ \ \ \ \ \mbox{and}\ \ \ \ \ \ \ q(x,t)=v(x+ct,t)+c\phi(x),
\end{equation}
We obtain by substituting $(\ref{changevar1})$ into $(\ref{sysdde})$ and after neglecting all high order non-linear terms in $p$ and $q$, the following linear evolution system
\begin{equation}\label{sys1}
	\left\{\begin{array}{lllll}
		p_t=\partial_x(q+cp),\\
		q_t=(1-4\partial_x^2)^{-1}\partial_x((1-5\partial_x^2)p-(3\phi^2+5\phi^4)p+c(1-4\partial_x^2)q).
	\end{array}\right.
\end{equation}
Since $\partial_x(q+cp)=(1-4\partial_x^2)^{-1}\partial_x((1-4\partial_x^2)q+c(1-4\partial_x^2)p)$, we obtain by $(\ref{sys1})$, the following linear system
\begin{equation}\label{sys2}
	W_t=J\mathcal{D}W,
\end{equation}
where $W=\left(\begin{array}{cccc} p\\ q\end{array}\right)=(p,q)$, $J=(I-4\partial_x^2)^{-1}\left(\begin{array}{lll}0\ \ \partial_{x}\\
	\partial_{x}\ \ 0\end{array}\right)$ and $\mathcal{D}$ is defined as in $(\ref{operator})$. To obtain the standard spectral problem, we need to consider the separation of variables $W(x,t)=e^{\mu t}V(x)$ to generate a growing mode solution that determines if the deviations in $(\ref{changevar1})$ are stable or not in the following sense: if $\mu \in \mathbb{C}$ is purely imaginary, then the norm of $W$ in $\mathbb{H}_{per}^1$ is bounded and the deviations in $(\ref{changevar1})$ are said to be stable. Otherwise, if the real part of $\mu$ is positive, the norm of $W$ in $\mathbb{H}_{per}^1$ is large enough when $t$ increases and the deviations in $(\ref{changevar1})$ can be considered unstable. The case when $\mu$ has negative real part is not considered in our model since equation $(\ref{ddequation})$ does not have dissipative terms and $(\ref{E})-(\ref{H})$ are conserved quantities. From $(\ref{sys2})$, the corresponding spectral problem associated to the equation $(\ref{sys2})$ then becomes
\begin{equation}\label{sys3}
	J\mathcal{D}V=\mu V,
\end{equation}
\indent In the periodic context, the spectral problem $(\ref{sys3})$ can be seen in an equivalent form as
\begin{equation}\label{sys4}
	J\mathcal{D}_{\Pi}V=\mu V,
\end{equation}
where 

\begin{equation}\label{operatorproj}\mathcal{D}_{\Pi}=\left(\begin{array}{cc} -5\frac{\partial^2}{\partial x^2}+1-3\phi^2-5\phi^4+\frac{1}{L}(3\phi^2+5\phi^4,\cdot)_{L_{per}^2} \,\ \,\ & -4c\frac{\partial^2}{\partial x^2} +c\\\\
		-4c\frac{\partial^2}{\partial x^2} +c&  -4\frac{\partial^2}{\partial x^2} +1\end{array}\right),\end{equation}
is the projection operator defined over the space $$\mathbb{M}=\left\{(f,g)\in \mathbb{L}_{per}^2;\ \int_0^Lfdx=0\ \mbox{and}\ \int_0^Lgdx=0 \right\}.$$
We have the following definition of spectral stability associated to the problem $(\ref{sys4})$.
\begin{definition}\label{defistab1}
	The periodic wave $\overrightarrow{\phi_c}=(\phi,-c\phi)\in \mathbb{H}_{per}^2$ is said to be spectrally stable if $\sigma(J\mathcal{D}) \subset i\mathbb{R}$ in  $\mathbb{M}$. Otherwise, that is, if $\sigma(J\mathcal{D})$ in $\mathbb{M}$ contains a point $\mu$ with $Re(\mu)>0$, the periodic wave $\overrightarrow{\phi_c}$ is said to be spectrally unstable.
\end{definition}

\indent It is important to mention that if one considers the spectral problem $(\ref{sys4})$ in the space $\mathbb{M}$, the operator $J$ becomes invertible with a bounded inverse, and the problem $(\ref{sys4})$ can be rewritten as
\begin{equation}\label{sys5}
	\mathcal{D}_{\Pi}V=\mu J^{-1}V,
\end{equation}
where $V$ is a smooth periodic vectorial function in $\mathbb{M}$. If the kernel of $\mathcal{D}$ is simple,  the classical theories in \cite{bronski}, \cite{kapitulodeco}, \cite{grillakis1}, \cite{grillakis2}, \cite{hakkaev1},  \cite{kapitula}, \cite{stan} can be applied to obtain the spectral stability of the wave $\overrightarrow{\phi_c}$ according to the Definition $\ref{defistab1}$. In fact, consider the matrix $\mathcal{P}$ defined as

$$
	\mathcal{P}=\left(\begin{array}{llll}\langle \mathcal{D}^{-1}(-c\chi,\chi),(-c\chi,\chi)\rangle_{\mathbb{L}_{per}^2} & \langle \mathcal{D}^{-1}(1,0),(-c\chi,\chi)\rangle_{\mathbb{L}_{per}^2} & \langle \mathcal{D}^{-1}(0,1),(-c\chi,\chi)\rangle_{\mathbb{L}_{per}^2}\\\\
	\langle \mathcal{D}^{-1}(-c\chi,\chi),(1,0)\rangle_{\mathbb{L}_{per}^2} & \langle \mathcal{D}^{-1}(1,0),(1,0)\rangle_{\mathbb{L}_{per}^2} & \langle \mathcal{D}^{-1}(0,1),(1,0)\rangle_{\mathbb{L}_{per}^2}\\\\
	\langle \mathcal{D}^{-1}(-c\chi,\chi),(0,1)\rangle_{\mathbb{L}_{per}^2} & \langle \mathcal{D}^{-1}(1,0),(0,1)\rangle_{\mathbb{L}_{per}^2} & \langle \mathcal{D}^{-1}(0,1),(0,1)\rangle_{\mathbb{L}_{per}^2}\end{array}\right),
$$
where $\chi=4\phi''-\phi$. We have to notice that since $\mathcal{D}$ is self-adjoint, $\ker(\mathcal{D})=[(\phi',-c\phi')]$ and $\{(-c\chi,\chi),(1,0),(0,1)\}\subset \ker(\mathcal{D})^{\bot}=\mbox{range}(\mathcal{D})$, we obtain that the inverse $\mathcal{D}^{-1}$ operator of $\mathcal{D}$ present in the entries of the matrix $\mathcal{P}$ above is well defined because $\mathcal{D}:\ker(\mathcal{D})^{\bot}\rightarrow \ker(\mathcal{D})^{\bot}$. Therefore, the spectral stability of the periodic wave $\overrightarrow{\phi_c}$ is determined if the difference
\begin{equation}\label{difspect}
	n(\mathcal{D}_{\Pi})-n(\mathcal{P})
\end{equation}
is zero. Here, $n(\mathcal{A})$ indicates the number of negative eigenvalues (counting multiplicities) of a certain linear operator $\mathcal{A}$. On the other hand, if the difference in $(\ref{difspect})$ is an odd number, the periodic wave is spectrally unstable. The difference in $(\ref{difspect})$ is commonly called in the current literature as \textit{Hamiltonian Krein Index} and it appears in several spectral stability approaches as in \cite{bronski}, \cite{kapitulodeco}, \cite{hakkaev1}, \cite{kapitula} and \cite{stan}.\\
\indent Particularly in \cite{bronski}, \cite{hakkaev1} and \cite{stan} the authors proved spectral stability results for a general second order PDE using the quadratic pencils technique in order to obtain a precise counting for the Hamiltonian Krein Index $\mathcal{K}_{\rm Ham}$. They obtained that if $\mathcal{K}_{\rm Ham}=0$, the periodic wave is spectrally stable while if $\mathcal{K}_{\rm Ham}$ is an odd number, the periodic wave is spectrally unstable and the senses of spectral stability/instability for both models are very close to the one presented in Definition $\ref{defistab1}$. Among the applications presented by the authors using this technique are the Klein-Gordon and Boussinesq type equations.\\
\indent As far as we can see and in the periodic context, we can apply, since $J=(I-4\partial_x^2)^{-1}\left(\begin{array}{lll}0\ \ \partial_{x}\\
	\partial_{x}\ \ 0\end{array}\right)$ has derivatives in its entries, the classical approaches of stability and studying the equivalent spectral problem $(\ref{sys4})$ to obtain the spectral stability without using the quadratic pencils technique as determined, for example, in \cite{hakkaev1} for the Boussinesq type equations. We have to notice that the equivalent problem $(\ref{sys4})$ can be rewritten, since $J$ in the zero-mean periodic space has bounded inverse, in the form $(\ref{sys5})$ and the spectral stability can be determined by studying the difference $(\ref{difspect})$ as determined in \cite{grillakis2} and \cite{kapitula}. \\
	\indent In the case of waves that vanish at infinity, the authors in \cite{nikolai} have determined the orbital stability for the general equation $(\ref{ddequation})$. More specifically, they considered the nonlinearity $f(u)=au^{p+1}+bu^{2p+1}$ in some cases where it is possible to obtain a hyperbolic secant profile. When $f(u)=u^{3}+u^{5}$, $h_1=5$ and $h_2=4$ in equation $(\ref{ddequation})$, the hyperbolic secant profile becomes the solution given by,
\begin{equation}
\phi(x)=2(1-c^2)^\frac{1}{2}\left( 1+\sqrt{1-\frac{16}{3}(1-c^2)}\cosh\left(2\sqrt{\frac{1-c^2}{5-4c^2}}x\right)\right)^{-\frac{1}{2}}.
\label{solitwav}
\end{equation}
\indent To prove the orbital stability, the authors used the approach in \cite{grillakis1}. They proved that the linearized operator around the solitary wave $\mathcal{D}$ has only one negative eigenvalue, which is simple, and zero is a simple eigenvalue whose associated eigenfunction is $(\phi',-c\phi')$. Both of these facts can then be used to conclude the orbital stability, provided that $d''(c)>0$, where $d''(c)$ is the second derivative with respect to $c$ of the function $d(c)=F(\phi,-c\phi)$. To obtain the positiveness of $d''(c)$, they used a numerical approach. Unfortunately, they obtained some cases where $d''(c)<0$ using numerics, but they were not enabled to conclude an instability result since $J=(I-4\partial_x^2)^{-1}\left(\begin{array}{lll}0\ \ \partial_{x}\\
\partial_{x}\ \ 0\end{array}\right)$ does not have a bounded inverse (the instability theory in \cite{grillakis1} can not be applied). Our intention is to provide a positive answer for the (spectral) stability/instability in the periodic context.\\
\indent Now, we present our spectral stability result:
\begin{theorem}
Let $L > 2\pi\sqrt{\frac{\sqrt{5}}{\sqrt{5}-1}}$ be fixed and consider  
	$c\in \left(0,\frac{1}{2}\sqrt{5-\frac{L^4}{(L^2-4\pi^2)^2}}\right)$. There exists an $c(L)>0$ such that:\\
	(i) if $c\in(0,c(L)]$, the periodic wave  
	$\overrightarrow{\phi_c}=(\phi,-c\phi)$ is spectrally unstable.\\
	(ii) If $c\in \left(c(L),\frac{1}{2}\sqrt{5-\frac{L^4}{(L^2-4\pi^2)^2}}\right)$, the periodic wave  
	$\overrightarrow{\phi_c}=(\phi,-c\phi)$ is spectrally stable.
	
\label{mainT}\end{theorem}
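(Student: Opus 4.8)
The plan is to compute the Hamiltonian Krein index $n(\mathcal{D}_{\Pi})-n(\mathcal{P})$ from $(\ref{difspect})$ exactly, and to show it jumps from an odd value to zero across a single speed $c(L)$. The obstruction to a direct computation is that $\mathcal{D}$ and $\mathcal{D}_{\Pi}$ are $2\times 2$ matrix operators, so the first step is to decouple them. Writing $\mathcal{D}=\left(\begin{smallmatrix} A & cC\\ cC & C\end{smallmatrix}\right)$ with $A=-5\partial_x^2+1-3\phi^2-5\phi^4$ and $C=-4\partial_x^2+1>0$, I would use the invertible congruence $T=\left(\begin{smallmatrix} I & 0\\ -cI & I\end{smallmatrix}\right)$, for which $T^{*}\mathcal{D}T=\mathrm{diag}(\mathcal{L},C)$, where the Schur complement $\mathcal{L}:=A-c^2C=-s\partial_x^2+r-3\phi^2-5\phi^4$ is exactly the scalar Hill operator obtained by linearizing $(\ref{ddedo1})$. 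Since $T$ is a bounded isomorphism preserving the zero-mean space $\mathbb{M}$ and $C>0$, invariance of the Morse index under congruence yields $n(\mathcal{D})=n(\mathcal{L})$, $\ker\mathcal{D}=[(\phi',-c\phi')]$, and $n(\mathcal{D}_{\Pi})=n(\mathcal{L}_{0})$, where $\mathcal{L}_{0}$ denotes $\mathcal{L}$ projected onto the zero-mean subspace $\{1\}^{\perp}$.

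With the problem reduced to the scalar $\mathcal{L}$, I would import from the period-map/Floquet analysis of the previous section the value $n(\mathcal{L})=1$ and the simplicity of $\ker\mathcal{L}=[\phi']$. Because $\phi'\in\{1\}^{\perp}$, the kernel survives the zero-mean restriction, and the classical constrained-eigenvalue formula gives $n(\mathcal{L}_{0})=n(\mathcal{L})-1$ if $a_{0}:=\langle \mathcal{L}^{-1}1,1\rangle<0$ and $n(\mathcal{L}_{0})=n(\mathcal{L})$ if $a_{0}>0$. The scalar $a_{0}$ is geometric: differentiating the profile equation in the integration constant $A$ yields $\mathcal{L}(\partial_{A}\phi)=-1$, whence $a_{0}=-\partial_{A}\!\left(\int_{0}^{L}\phi\,dx\right)\big|_{A=0}$, a quantity I can extract from the explicit wave; I expect $a_{0}>0$, so that $n(\mathcal{D}_{\Pi})=1$ throughout the interval.

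The same congruence trivializes $\mathcal{P}$. Using $C^{-1}1=1$ and $C\phi=-\chi$ (so $C^{-1}\chi=-\phi$), together with $\mathcal{D}^{-1}(-c\chi,\chi)=\partial_{c}\overrightarrow{\phi_c}$, obtained by differentiating $D_{c}'(\overrightarrow{\phi_c})=0$ in $c$ and using $F'(\overrightarrow{\phi_c})=(c\chi,-\chi)$, every entry of $\mathcal{P}$ collapses to a scalar expressed through $a_{0}$, $G(c):=\int_{0}^{L}\phi\,dx>0$, $G'(c)$, and $\tfrac{d}{dc}F(\overrightarrow{\phi_c})$ (the classical Grillakis--Shatah--Strauss quantity $d''(c)$, with $\mathcal{P}_{11}=-\tfrac{d}{dc}F(\overrightarrow{\phi_c})$). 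Performing the symmetric operation $C_{3}\mapsto C_{3}+cC_{2}$ and then eliminating the third coordinate against the positive entry $\langle 1,1\rangle=L$, I reduce the inertia of $\mathcal{P}$ to that of the $2\times 2$ matrix $Q=\left(\begin{smallmatrix} -d''(c)-G^{2}/L & G'\\ G' & a_{0}\end{smallmatrix}\right)$, so that $n(\mathcal{P})=n(Q)$.

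It then remains to track these scalars over $c\in\left(0,\tfrac12\sqrt{5-\tfrac{L^{4}}{(L^{2}-4\pi^{2})^{2}}}\right)$ from the explicit elliptic-function profile $\phi=\phi_{c}$, and this is where the genuine work lies. The endpoint conditions on $L$ and $c$ secure $s>0$ and place us in the existence/monotonicity regime; within it I would verify that $a_{0}>0$ and $-d''(c)-G^{2}/L>0$ keep a fixed sign (the latter via the Cauchy--Schwarz bound $G^{2}\le L\int_{0}^{L}(\phi^{2}+4\phi'^{2})\,dx$ at the left endpoint, propagated across the interval), so that $n(Q)$ is governed solely by $\det Q=(-d''(c)-G^{2}/L)\,a_{0}-(G')^{2}$. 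The threshold $c(L)$ is the unique zero of $\det\mathcal{P}$ (equivalently of $\det Q$), at which a simple real eigenvalue of the spectral problem $(\ref{sys4})$ crosses the origin: for $0<c<c(L)$ one has $\det Q>0$, hence $n(Q)=0$ and $n(\mathcal{D}_{\Pi})-n(\mathcal{P})=1$ (odd, so spectrally unstable), while for $c>c(L)$ one has $\det Q<0$, hence $n(Q)=1$ and $n(\mathcal{D}_{\Pi})-n(\mathcal{P})=0$ (spectrally stable); the borderline $c=c(L)$ is absorbed into the unstable regime of (i) by a limiting argument, giving precisely (i)--(ii). The main obstacle I anticipate is exactly this sign bookkeeping: proving that $\det Q$ is monotone (so it vanishes only once, defining $c(L)$) and controlling the $c$-derivatives of the period integrals $G(c)$ and $\int_{0}^{L}(\phi^{2}+4\phi'^{2})\,dx$, which seems to demand either closed-form elliptic-integral identities or careful asymptotic and numerical estimates near the two endpoints.
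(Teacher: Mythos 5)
Your reduction is essentially the paper's: the congruence $T=\left(\begin{smallmatrix} I & 0\\ -cI & I\end{smallmatrix}\right)$ is exactly the matrix $A$ used in Section 4 to obtain $\mathcal{L}=A^{t}\mathcal{D}A=\mathrm{diag}(\mathcal{L}_1,\mathcal{L}_2)$ with $\mathcal{L}_2=I-4\partial_x^2>0$; the count $n(\mathcal{D})=z(\mathcal{D})=1$ comes from the same Floquet/period-map argument; the identity $\mathcal{D}^{-1}(-c\chi,\chi)=\partial_c\overrightarrow{\phi_c}$ and the index formulas $(\ref{indexformula12})$--$(\ref{indexformula123})$ with the constraint matrices $\mathcal{P}$ and $\mathcal{Q}$ are the same engine. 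Your scalar $a_0=\langle\mathcal{L}_1^{-1}1,1\rangle$ is equivalent to the paper's $\det(\mathcal{Q})$: using $f_2=-cf_1$ one finds $\det(\mathcal{Q})=L\langle f_1,1\rangle=L\,a_0$, so your one-constraint formulation and your Schur-complement collapse of $\mathcal{P}$ to a $2\times2$ matrix are clean (and slightly tidier) repackagings of what the paper computes.

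The genuine gap is the step you yourself flag as the ``main obstacle'': you never establish that $a_0>0$, that the $(1,1)$ entry of your $Q$ keeps a fixed sign, or that $\det Q$ vanishes exactly once, and without these the conclusion does not follow --- indeed the sign of $a_0$ controls whether $n(\mathcal{D}_\Pi)$ equals $1$ or $0$, which changes the whole dichotomy. Be aware that the paper does not close this step analytically either: it solves the IVPs $(\ref{edodphi})$ and $(\ref{edof1})$ numerically and reads off the signs of $\det(\mathcal{Q})$ and $\det(\mathcal{P})$ and the uniqueness of the zero $c(L)$ from Mathematica plots (Figures $\ref{fig1}$ and $\ref{fig2}$). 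So to complete your argument you must either supply the elliptic-integral estimates you allude to (which would actually improve on the paper) or accept the same numerical verification. Two smaller points: $\mathcal{P}_{11}=-\tfrac{d}{dc}F(\overrightarrow{\phi_c})=-d'(c)$ in the paper's notation for $d(c)=F(\phi,-c\phi)$, not $d''(c)$; and note that the sign of the $3\times3$ determinant $\det\mathcal{P}$ alone distinguishes only the parity of $n(\mathcal{P})$, so your reduction to the $2\times2$ matrix $Q$ (or some control of the trace and principal minors) is actually needed to rule out $n(\mathcal{P})=2$ or $3$ --- a point the paper itself glosses over. Finally, placing $c=c(L)$ in the unstable regime needs justification, since at that speed $\det\mathcal{P}=0$ and the index formula degenerates.
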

\indent Our paper is organized as follows: in Section 2, we show the existence of a family of periodic wave solutions for the equations $(\ref{edodphi})$. The characterization of periodic waves is determined in Section 3. Spectral analysis for the linearized operator $\mathcal{D}$ is established in Section 4. Finally, the spectral stability of the periodic wave will be shown in Section 5.\\

\indent \textbf{Notation.} Here we introduce the basic notation concerning the periodic Sobolev spaces and other useful notations used in our paper. For a more complete introduction to these spaces, we refer the reader to see \cite{Iorio}. By $L^2_{per}:=L^2_{per}([0,L])$, $L>0$, we denote the space of all square integrable functions which are $L$-periodic. For $s\geq0$, the Sobolev space
$H^s_{per}:=H^s_{per}([0,L])$
is the set of all periodic distributions such that
$$
\|f\|^2_{H^s_{per}}:= L \sum_{k=-\infty}^{\infty}(1+|k|^2)^s|\hat{f}(k)|^2 <\infty,
$$
where $\hat{f}$ is the periodic Fourier transform of $f$. The space $H^s_{per}$ is a  Hilbert space with natural inner product denoted by $(\cdot, \cdot)_{H^s_{per}}$. When $s=0$, the space $H^s_{per}$ is isometrically isomorphic to the space  $L^2_{per}$, that is, $L^2_{per}=H^0_{per}$ (see, e.g., \cite{Iorio}). The norm and inner product in $L^2_{per}$ will be denoted by $\|\cdot \|_{L^2_{per}}$ and $(\cdot, \cdot)_{L^2_{per}}$. In our paper, we do not distinguish if the elements in $H_{per}^s$ are complex- or real-valued.\\ 
\indent In addition, to simplify notation we set
 $\mathbb{H}^s_{per}:= H^s_{per} \times H^s_{per}$
endowed with their usual norms and scalar products. When necessary and since $\mathbb{C}$ can be identified with $\mathbb{R}^2$, notations above can also be used in the complex/vectorial case in the following sense: for $\overrightarrow{f}\in \mathbb{H}_{per}^s$ we have $f=f_1+if_2\equiv (f_1,f_2)$, where $f_i\in H_{per}^s$, $i=1,2$.\\
\indent Throughout this paper, we  fix the following embedding chains with the Hilbert space $L_{per}^2$ identified with its dual (by the Riesz Theorem) as $$H_{per}^1\hookrightarrow L_{per}^2\equiv (L_{per}^2)'\hookrightarrow H_{per}^{-1}.$$
\indent The symbols $\sn(\cdot, k), \dn(\cdot, k)$ and $\cn(\cdot, k)$ represent the Jacobi elliptic functions of \textit{snoidal}, \textit{dnoidal}, and \textit{cnoidal} type, respectively. For $k \in (0, 1)$, $K(k)$ and $E(k)$ will denote the complete elliptic integrals of the first and second kind, respectively. For the precise definition and additional properties of the elliptic functions we refer the reader to  \cite{byrd}.

\section{Explicit Solutions via Quadrature Method.}

Our intention is to apply the quadrature method and obtaining explicit positive periodic solutions for the system \eqref{sysdde-ode}. Indeed, multiplying \eqref{ddedo1} by $-2\phi'$, we see
$$(\phi'^2)'-\frac{r}{s}(\phi^2)'+ \frac{1}{2s}(\phi^4)'+\frac{1}{3s}(\phi^6)'=0.$$
After an integration, it follows that
$$(\phi')^2-\frac{r}{s}\phi^2+ \frac{1}{2s}\phi^4+\frac{1}{3s}\phi^6-B=0,$$
where $B$ is a constant of integration. Thus,
\begin{equation}
\label{eqquadrature}
(\phi')^2=\frac{r}{s}\phi^2- \frac{1}{2s}\phi^4-\frac{1}{3s}\phi^6+B.
\end{equation}
\indent Let us consider $\Psi=\phi^2$. By $(\ref{eqquadrature})$, we obtain
$$(\Psi')^2=4\phi^2(\phi')^2=4\Psi\left(\frac{r}{s}\Psi- \frac{1}{2s}\Psi^2-\frac{1}{3s}\Psi^3+B \right),$$
that is,
$$(\Psi')^2=\frac{4}{3s}\left(-\Psi^4 - \frac{3}{2}\Psi^3 +  3r\Psi^2+3sB\Psi \right).$$
Denoting $R(\Psi)=-\Psi^4- \frac{3}{2}\Psi^3 +  3r\Psi^2 + 3sB\Psi$, we can write
\begin{equation}
\label{eqquadrature2} (\Psi')^2=\frac{4}{3s}R(\Psi).
\end{equation}
Equation $(\ref{eqquadrature2})$ is useful to deduce explicit periodic solutions depending on the  Jacobi elliptic functions. To do so, we need to perform a precise study concerning the real roots of the polynomial $R(\Psi)$.

In what follows, we consider four roots of $R(\Psi)$ satisfying $\alpha_1<\alpha_2=0<\alpha_3<\Psi<\alpha_4$. Important to notice that all those real roots must satisfy the system
\begin{equation}
\label{systemalpha}
\left\{\begin{array}{l}
\alpha_1+\alpha_3+\alpha_4=-\frac{3}{2}\\
\alpha_1\alpha_3+\alpha_1\alpha_4+\alpha_3\alpha_4=-3r\\
\alpha_1\alpha_3\alpha_4=3sB.
\end{array} \right.
\end{equation}

Having in mind the chain of inequalities $\alpha_1<0<\alpha_3<\Psi<\alpha_4$ and the system $(\ref{systemalpha})$, we can apply Formula 257.00 in \cite{byrd} to get
$$\Psi(x)=\frac{\alpha_4dn^2\left(\frac{2}{g\sqrt{3s}}x,k\right)}{1+\beta^2sn^2\left(\frac{2}{g\sqrt{3s}}x,k\right) },$$
where 
\begin{equation}
\label{betagk}
\beta^2=\frac{\alpha_4}{-\alpha_1}k^2>0, \,\   \,\ g=\frac{2}{\sqrt{\alpha_4(\alpha_3-\alpha_2)}}, \,\ \,\  k^2=\frac{-\alpha_1(\alpha_4-\alpha_3)}{\alpha_4(\alpha_3-\alpha_1)},
\end{equation}
where $sn$ and $dn$ are the Jacobi Elliptic functions of senoidal and dnoidal  type, respectively. Parameter $k\in(0,1)$ is called elliptic modulus.  

Thus, $\phi$ is expressed as
\begin{equation}\label{phix}
\phi(x)=\frac{\sqrt{\alpha_4}dn\left(\frac{2}{g\sqrt{3s}}x,k\right)}{\sqrt{1+\beta^2sn^2\left(\frac{2}{g\sqrt{3s}}x,k\right) }}.
\end{equation}
Since the dnoidal function has fundamental period equals to $2K(k)$, one has that the period $T:=T_\phi$ of $\phi$ is given by
\begin{equation}
\label{Tphialphas}
T=\frac{2\sqrt{3s}K(k)}{\sqrt{\alpha_4(\alpha_3-\alpha_1)}},
\end{equation}
where $K(k)=\int_0^1\frac{dt}{\sqrt{(1-k^2t^2)(1-t^2)}}$ is the complete elliptic integral of the first type.

Let $c\in(0,1)$ be fixed. By \eqref{systemalpha}, we can write $\alpha_1$ and $\alpha_2$ in terms of $\alpha_4$ as 
\begin{equation}\label{alpha3alpha4}
\alpha_1=\frac{-2\alpha_4-3-\sqrt{3}\sqrt{\beta}}{4} \,\ \text{and}\,\ \alpha_3=\frac{-2\alpha_4-3+\sqrt{3}\sqrt{\beta}}{4},
\end{equation}
where $\beta=-4\alpha_4^2-4\alpha_4+3+16r$. To make $\alpha_1$ and $\alpha_3$ well defined, it is necessary to consider $\beta\geq 0$. This fact occurs provided that
$$\frac{-1-\sqrt{4+16r}}{2}\leq \alpha_4\leq \frac{-1+\sqrt{4+16r}}{2}.$$

Moreover, in the interval $\left(0,\frac{-1+\sqrt{4+16r}}{2}\right)$, we have $\alpha_3'(\alpha_4)<0$. In addition, condition $\alpha_1<0<\alpha_3<\alpha_4$ implies that the maximum value for the root $\alpha_3$ is $\frac{-1+\sqrt{1+4r}}{2}$,  when $\alpha_4=\frac{-1+\sqrt{1+4r}}{2}$ and the minimum value is $0$, when $\alpha_4=\frac{3+\sqrt{9+48r}}{4}$. We can conclude that $\alpha_3$ and $\alpha_4$ must satisfy the following inequalities
$$0<\alpha_3<\frac{-1+\sqrt{1+4r}}{2}<\alpha_4<\frac{-3+\sqrt{9+48r}}{4}.$$
Therefore, since inequality 
$$\frac{-3+\sqrt{9+48r}}{4}<\frac{ -2+\sqrt{16+64r}}{4}=\frac{ -1+\sqrt{4+16r}}{2}$$
holds, we can deduce $\beta>0$ as desired.

On the other hand, we expressed $\alpha_1$ and $\alpha_3$ in terms of of $\alpha_4$ and all of them are well defined in the interval $\left(\frac{-1+\sqrt{1+4r}}{2},\frac{-3+\sqrt{9+48r}}{4}\right)$. This fact implies that it is possible to express constant $B$ in $(\ref{systemalpha})$,  the period $T$ and the modulus $k$  as functions of the root $\alpha_4$ and $c$, that is,
\begin{equation}
\label{Bfunctionalpha4}
B=\frac{1}{3s}\left(\alpha_4^3+\frac{3}{2}\alpha_4^2-3r\alpha_4\right), \,\ \,\  T=\frac{2\sqrt{2\sqrt{3}s}K(k)}{\sqrt{\alpha_4\sqrt{\beta}}},
\end{equation}
and
\begin{equation}\label{kfunctionalpha4}
k^2=\frac{6\alpha_4^2+9\alpha_4-12r+\alpha_4\sqrt{3}\sqrt{\beta}}{2\sqrt{3}\alpha_4\sqrt{\beta}}.
\end{equation}

Let $c\in(0,1)$ be fixed. In the next theorem, we prove that the period $T$ is strictly increasing as function of $\alpha_4\in\left(\frac{-1+\sqrt{1+4r}}{2},\frac{-3+\sqrt{9+48r}}{4}\right)$. As a consequence, if $\alpha_4 \to \frac{-1+\sqrt{1+4r}}{2}$, then 
$$T \to \frac{2\pi \sqrt{s}}{\sqrt{1+4r-\sqrt{1+4r}}} \,\ \,\ \text{and}\,\ \,\ T>  \frac{2\pi \sqrt{s}}{(1+4r)^\frac{1}{4}\sqrt{\sqrt{1+4r}-1}}.$$
Indeed $\alpha_4 \to \frac{-1+\sqrt{1+4r}}{2}$ implies $\alpha_3 \to \frac{-1+\sqrt{1+4r}}{2}$ and $\alpha_1\to -\frac{1+2\sqrt{1+4r}}{2}$. By the expression of $k$ in \eqref{betagk}, we have $k\to 0$, so that $K(k) \to \frac{\pi}{2}$. Thus, by \eqref{Tphialphas} one has $T \to \frac{2\pi \sqrt{s}}{\sqrt{1+4r-\sqrt{1+4r}}}$. Since ${\rm dn}(\cdot, 0^+) \sim 1$, we obtain that the periodic solution $\phi$ converges to the equilibrium solution of \eqref{ddedo} as
$\phi(x)=\sqrt{\frac{\sqrt{1+4(1-c^2)}-1}{2}}.$
On the other hand, if $\alpha_4 \to \frac{-3+\sqrt{9+48r}}{4}=:\sigma_4$ then
$\alpha_1 \to \frac{-3-\sqrt{9+48r}}{4}=:\sigma_1$, and $\alpha_3 \to 0$.  By the expression of $k$ in \eqref{betagk}, one has $k\to 1$, $K(k) \to \infty$ and $T\rightarrow+\infty$. Since ${\rm dn}(\cdot, 1^-)\sim {\rm sech}(\cdot)$ and ${\rm sn}(\cdot, 1^-)\sim {\rm tanh}(\cdot)$, the periodic wave $\phi$ must be the solitary wave
$$\phi(x)=\left(\frac{-\sigma_1\sigma_4\sech^2(\gamma x)}{-\sigma_1+\sigma_4\tanh^2(\gamma x)}\right)^\frac{1}{2}=\left(\frac{-\sigma_1\sigma_4}{-\sigma_4+\frac{\sigma_4-\sigma_1}{2}+\frac{\sigma_4-\sigma_1}{2}\cosh(2\gamma x)}\right)^\frac{1}{2}, $$
where 
$$\gamma=\frac{2}{g\sqrt{3s}}=\sqrt{\frac{-\sigma_1\sigma_4}{3s}}.$$
\indent Since, $-\sigma_1\sigma_4=3r$,  $\gamma=\sqrt{\frac{r}{s}}$ and $\frac{\sigma_4-\sigma_1}{2}=\frac{\sqrt{9+48r}}{4}$, $\phi$ can be rewritten as
$$\phi(x)=\left(\frac{12r}{3+\sqrt{9+48r}\cosh(2\gamma x)}\right)^\frac{1}{2}.$$
Finally, it is possible to express $\phi$ as
$$\phi(x)=2(1-c^2)^\frac{1}{2}\left( 1+\sqrt{1-\frac{16}{3}(1-c^2)}\cosh\left(2\sqrt{\frac{1-c^2}{5-4c^2}}x\right)\right)^{-\frac{1}{2}}.$$
This last expression, is exactly the solitary wave obtained in \cite{nikolai} for the case $h_1=5$, $h_2=4$ and $a=b=1$. Next theorem establishes the monotonicity of the period $T$ in $(\ref{Bfunctionalpha4})$ in terms of $\alpha_4$ and $c$.

\begin{lemma}\label{teo01}
	Consider $T$ in \eqref{Bfunctionalpha4} as the period-map of the function $\phi$ in \eqref{phix} which depends on the variables $(\alpha_4,c)\in \Omega$, where $\Omega\subset\mathbb{R}^2$ is the open subset
	$$\Omega =  \left(\frac{-1+\sqrt{1+4r}}{2}, \frac{-3+\sqrt{9+48r}}{4}\right) \times \left(0,1\right).$$
	 For all $(\alpha_4,c)\in \Omega$, the period-map $T$ is strictly increasing with respect to $\alpha_4$ and $c$.
\end{lemma}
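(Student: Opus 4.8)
The plan is to argue directly from the closed form \eqref{Bfunctionalpha4}. Since \eqref{alpha3alpha4} gives $\alpha_3-\alpha_1=\tfrac{\sqrt3}{2}\sqrt{\beta}$, the period may be rewritten as $T^2=\dfrac{12\,s\,K(k)^2}{\alpha_4(\alpha_3-\alpha_1)}$, and because $s=5-4c^2$ depends only on $c$ it is convenient to differentiate $T^2$ rather than $T$. I would first record the facts used throughout: by \eqref{alpha3alpha4} the roots $\alpha_1,\alpha_3$ are explicit functions of $(\alpha_4,r)$, so all of $\partial_{\alpha_4}\alpha_i$ and $\partial_c\alpha_i$ (through $r=1-c^2$, with $\partial_c r=-2c$) are elementary; the modulus is given by \eqref{kfunctionalpha4}; and the complete elliptic integral satisfies $\frac{dK}{dk}=\frac{E(k)-(1-k^2)K(k)}{k(1-k^2)}>0$, together with the sharp bounds $(1-k^2)K(k)<E(k)<K(k)$ and $K(k)\ge\frac{\pi}{2}$.

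The preliminary step is to show that the modulus $k$ is strictly increasing in $\alpha_4$ and in $c$ on $\Omega$; this is consistent with the boundary analysis already performed, where $k\to0^+$ as $\alpha_4\to\frac{-1+\sqrt{1+4r}}{2}$ and $k\to1^-$ as $\alpha_4\to\frac{-3+\sqrt{9+48r}}{4}$, and near the left endpoint $\partial_{\alpha_4}k$ blows up like $(\alpha_4-\alpha_{4,\min})^{-1/2}$. Writing $D:=\alpha_4(\alpha_3-\alpha_1)$, differentiation gives $\partial_{\alpha_4}(T^2)=\frac{12s\,K}{D^2}\big(2K'(k)\,\partial_{\alpha_4}k\,D-K\,\partial_{\alpha_4}D\big)$, so $\partial_{\alpha_4}T>0$ is equivalent to the single inequality $2K'(k)\,\partial_{\alpha_4}k\,D>K\,\partial_{\alpha_4}D$. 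Holding $\alpha_4$ fixed and using $\partial_c s=-8c$, the same computation reduces $\partial_c T>0$ to a comparable inequality, now with the extra, adverse contribution of the decreasing prefactor $\sqrt{s}$ competing against the growth of $K(k)$.

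The main obstacle is exactly this competition. The boundary values $D\to\frac{3((1+4r)-\sqrt{1+4r})}{4}$ at the left endpoint and $D\to3r$ at the right endpoint satisfy $3r>\frac{3((1+4r)-\sqrt{1+4r})}{4}$, so $D$ is net increasing; hence $\partial_{\alpha_4}D>0$ on a substantial part of $\Omega$ and one cannot conclude from the monotonicity of $K$ alone — the growth of $K(k)$ must strictly dominate that of the algebraic factor $D$. To handle it I would substitute the explicit $\partial_{\alpha_4}k$, $\partial_{\alpha_4}D$ (elementary in $\alpha_4,r,\beta$) and eliminate $K'(k)$ via the identity above; clearing the denominator $k(1-k^2)$ turns the inequality into $2\big(E(k)-(1-k^2)K(k)\big)\,\partial_{\alpha_4}k\,D>k(1-k^2)K(k)\,\partial_{\alpha_4}D$, whose right-hand side carries the factor $1-k^2$ that tames the $k\to1^-$ regime automatically.

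What remains is to verify this reduced inequality uniformly on $\Omega$, and this is the crux. The two sides are linear in $E(k)$ and $K(k)$, so after inserting \eqref{kfunctionalpha4} the statement becomes the positivity of a single expression that is rational in $\alpha_4,r$ and in the pair $(E(k),K(k))$; I would control it with the sharp estimates $(1-k^2)K<E<K$, refined where necessary by the monotonicity of the quotient $E(k)/K(k)$, collapsing it to a polynomial positivity statement to be checked on the explicit rectangle $\frac{-1+\sqrt{1+4r}}{2}<\alpha_4<\frac{-3+\sqrt{9+48r}}{4}$, $r\in(0,1)$. I expect the $c$-monotonicity to be the harder of the two, since there the prefactor actively decreases and the elliptic estimates must be correspondingly tight; making these bounds quantitative enough to close the inequality uniformly is where the real work lies.
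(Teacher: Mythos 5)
Your reduction is sound and runs parallel to the paper's: both differentiate the explicit formula \eqref{Bfunctionalpha4} and arrive at an inequality of the form $\big(\text{elliptic part}\big)\cdot\partial_{\alpha}k > \big(\text{algebraic part}\big)\cdot K(k)$ (your version, $2K'(k)\,\partial_{\alpha_4}k\,D>K\,\partial_{\alpha_4}D$ with $D=\alpha_4(\alpha_3-\alpha_1)=\tfrac{\sqrt3}{2}\alpha_4\sqrt\beta$, is the same inequality the paper obtains after clearing prefactors). The boundary analysis of $D$ and the observation that $\partial_{\alpha_4}D>0$ on part of $\Omega$, so that the growth of $K$ must genuinely beat the algebraic factor, are correct and well observed. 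But the proof stops exactly where the lemma has to be proved: you state yourself that verifying the reduced inequality uniformly on $\Omega$ ``is where the real work lies,'' and you do not do it. That is a genuine gap, not a stylistic omission.

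Moreover, the specific tools you propose for closing it are too weak. After eliminating $K'$ via $K'(k)=\frac{E-(1-k^2)K}{k(1-k^2)}$, your left-hand side carries the factor $E(k)-(1-k^2)K(k)$, which behaves like $\tfrac{\pi}{4}k^2$ as $k\to0^+$; the bound $E>(1-k^2)K$ gives only the trivial lower bound $0$ there, so the inequality cannot be ``collapsed to a polynomial positivity statement'' by $(1-k^2)K<E<K$ alone --- the estimate degenerates precisely in the limit $\alpha_4\to\frac{-1+\sqrt{1+4r}}{2}$ where $k\to0$ and both sides vanish to the same order. What is missing is a quantitative inequality that decouples the elliptic content from the algebraic content. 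The paper's proof supplies exactly this: it uses that $\frac{1}{k}\frac{dK}{dk}-\frac{K(k)}{2}>0$ for all $k\in(0,1)$, so that $\partial_{\alpha_4}T>0$ follows once the purely algebraic ratio $\frac{\sqrt3\,\alpha\beta^{1/2}(\beta-4\alpha^2-2\alpha)}{\sigma}$ (and, for the $c$-derivative, the analogous quantity $\omega$) is shown to be $<1$; the former is checked by direct (symbolic) computation and the latter by explicit elementary estimates culminating in $\omega<g(\sqrt5)<1$. If you add an inequality of this type --- or an equivalent sharp lower bound on $E-(1-k^2)K$ of order $k^2K$ --- your reduction closes along essentially the same lines as the paper's; without it, the argument is incomplete.
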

\begin{proof}
	Computing the derivative of $T$ with respect to $\alpha_4:=\alpha $, we obtain
	\begin{equation}\label{derivTalpha4}
	\frac{\partial T}{\partial \alpha }=\frac{2\sqrt{2\sqrt{3}s}}{\alpha \sqrt{\beta}}\left( \frac{\partial K}{\partial k}\frac{\partial k}{\partial \alpha }\sqrt{\alpha \sqrt{\beta}}- \frac{ \left(\sqrt{\beta}+\frac{\alpha }{2\sqrt{\beta}}\frac{\partial \beta}{\partial \alpha }\right)}{2\sqrt{\alpha \sqrt{\beta}}}K(k)\right).
	\end{equation}
On the other hand, deriving the functions $\beta=-4\alpha ^2-4\alpha +3+16r$ and $k$ in $(\ref{kfunctionalpha4})$ with respect to $\alpha$, we see
\begin{equation}
\label{derivqalpha4}
\frac{\partial q}{\partial \alpha }=-8\alpha -4,
\end{equation}
and 
$\frac{\partial k}{\partial \alpha }=\frac{6\alpha ^3+9\alpha ^2-18\alpha r+9r+48r^2}{\sqrt{3}k\alpha ^2\beta^\frac{3}{2}}.$
In addition, since $r=1-c^2>0$ and $\alpha >0$, we can rewrite \eqref{derivkalpha4} as
\begin{equation}
\label{derivkalpha4}\frac{\partial k}{\partial \alpha }=\frac{12\alpha ^3+14\alpha ^2+(2\alpha -9r)^2+18r+15r^2}{2\sqrt{3}k\alpha ^2\beta^\frac{3}{2}}.
\end{equation}
Notice that $\frac{\partial k}{\partial \alpha }$ is a positive function for all $(\alpha,c)\in\Omega$.

To simplify the calculations, let us denote $\sigma=6\alpha ^3+9\alpha ^2-18\alpha r+9r+48r^2$. Substituting \eqref{derivqalpha4} and \eqref{derivkalpha4} into \eqref{derivTalpha4}, it follows that $\dfrac{\partial T}{\partial \alpha }>0$ if, and only if 
$\frac{\partial K}{\partial k}\frac{\sigma}{\sqrt{3}k\alpha ^2\beta^\frac{3}{2}}\sqrt{\alpha \sqrt{\beta}}- \frac{ \left(\sqrt{\beta}+\frac{\alpha }{2\sqrt{\beta}}(-8\alpha -4)\right)}{2\sqrt{\alpha \sqrt{\beta}}}K(k)>0.$
Multiplying the last inequality by $\dfrac{\sqrt{3}\alpha ^\frac{3}{2}\beta^\frac{5}{4}}{\sigma}$, we have  $\dfrac{\partial T_\phi}{\partial \alpha }>0$ provided that
$\frac{\partial K}{\partial k}\frac{1}{k}-\left(\frac{\sqrt{3}\alpha \beta^\frac{1}{2}(\beta-4\alpha ^2-2\alpha )}{\sigma} \right)\frac{K(k)}{2}>0.$
Since the expression $\dfrac{\partial K}{\partial k}\dfrac{1}{k} -\dfrac{K}{2}$ is always positive for all $k\in (0,1)$, we can prove $\dfrac{\partial T}{\partial \alpha }>0$ by proving that $\frac{\sqrt{3}\alpha \beta^\frac{1}{2}(\beta-4\alpha ^2-2\alpha )}{\sigma}<1$. This fact can be easily checked using some massive calculations or it can be verified using Mathematica program. 


We prove now $T$ is increasing with respect to $c$. In fact, deriving $T$ in terms of $c$, we obtain
	
\begin{equation}\label{derivTc}\frac{\partial T}{\partial c}=\frac{2\sqrt{2}3^\frac{1}{4}\alpha ^\frac{1}{2}}{\alpha \sqrt{\beta}\sqrt{5-4c^2}\beta^\frac{3}{4}}\left[\left(-4cK(k)+(5-4c^2)\frac{\partial K}{\partial k}\frac{\partial k}{\partial c}\right)\beta -\frac{5-4c^2}{4}\frac{\partial \beta}{\partial c}K(k)   \right].\end{equation}
Since
$\frac{\partial \beta}{\partial c}=-32c$ and $\frac{\partial k}{\partial c}=\frac{2\sqrt{3}c}{k\alpha \beta^{\frac{3}{2}}}\left( 2\alpha +3+8(1-c^2)\right), $
it follows that
$$\frac{\partial T}{\partial c}=C\left[ \left(-4c\beta +\frac{(5-4c^2)(32c)}{4}\right)K(k)+\frac{2\sqrt{3}c(5-4c^2)\beta\left( 2\alpha +3+8(1-c^2)\right)}{k\alpha_2\beta^{\frac{3}{2}}}\frac{\partial K}{\partial k}\right],
$$	
where $C=\frac{2\sqrt{2}3^\frac{1}{4}\alpha ^\frac{1}{2}}{\alpha_2\sqrt{\beta}\sqrt{5-4c^2}\beta^\frac{3}{4}}>0.$
Let us denote  $C_1=2cC$. We obtain,
\begin{equation}\label{per1}\frac{\partial T}{\partial c}=C_1\left[ \left(-2\beta +4(5-4c^2)\right)K(k)+\frac{\sqrt{3}(5-4c^2)\left( 2\alpha +3+8(1-c^2)\right)}{k\alpha_2\beta^{\frac{1}{2}}}\frac{\partial K}{\partial k}\right].
\end{equation}	
Consider
 $\rho =\frac{\sqrt{3}(5-4c^2)\left( 2\alpha +3+8(1-c^2)\right)}{\alpha_2\beta^{\frac{1}{2}}}.$ We have by $(\ref{per1})$ that $\frac{\partial T}{\partial c}$ can be expressed as 
$\frac{\partial T}{\partial c}=C_1\rho \left[\frac{1}{k}\frac{\partial K}{\partial k} -\frac{2(2\beta -4(5-4c^2))}{\rho }\frac{K(k)}{2}\right].
$
Since $\rho >0$ and	$\frac{1}{k}\frac{\partial K}{\partial k} -\frac{K(k)}{2}>0$, it is enough to prove
$\omega:=\frac{2(2\beta -4(5-4c^2))}{\rho }<1.$
In fact, using that $\beta<3+12(1-c^2)$ and  $\alpha <\frac{-3+\sqrt{9+48(1-c^2)}}{4}$, it follows from the fact $\alpha >\frac{-1+\sqrt{1+4(1-c^2)}}{2}$ that
$\omega<\frac{(1+4(1-c^2))^\frac{1}{2}(-3+\sqrt{9+48(1-c^2)}) }{((1+4(1-c^2))^\frac{1}{2}+2+8(1-c^2))}.
$\\
\indent After some simplifications, we are enabled to conclude from the last inequality
\begin{equation}\label{eq01}
\omega<\frac{-3+\sqrt{12}\sqrt{1+4(1-c^2)} }{1+2\sqrt{1+4(1-c^2)}}.
\end{equation}
\indent Finally, denoting $e=\sqrt{1+4(1-c^2)}$, we have $0<e<\sqrt{5}$ with $g(e):=\frac{(-3+\sqrt{12}e)}{(1+2e)}$ being an increasing function with maximum value given by
\begin{equation}\label{eq02} g(\sqrt{5})=\frac{-3+2\sqrt{15}}{1+ 2\sqrt{5}}.
\end{equation}
Combining \eqref{eq01} and \eqref{eq02}, we obtain
$$\omega<g(\sqrt{5})<\frac{-3+2\sqrt{16}}{1+2\sqrt{4}}<1.$$
This conclude the proof of the lemma.
\end{proof}

\begin{theorem}\label{teo02}
	Let $L > 2\pi\sqrt{\frac{\sqrt{5}}{\sqrt{5}-1}}$ be fixed. For each 
	$c_0\in \left(0,\frac{1}{2}\sqrt{5-\frac{L^4}{(L^2-4\pi^2)^2}}\right)$
	consider the unique $\alpha_{4,0} \in \left(\frac{-1+\sqrt{1+4r}}{2}, \frac{-3+\sqrt{9+48r}}{4}\right)$ such that	$T(\alpha_{4,0}, c_0)=L$. Then,
	\begin{enumerate}
		\item there exist an interval $I_1$ around $c_0$, an interval $I_2$ around $\alpha_{4,0}$ and a unique smooth function $\Lambda:I_1 \to I_2$ such that $\Lambda(c_0)=\alpha_{4,0}$ and 
		$$T(\Lambda(c),c)=\frac{2\sqrt{2\sqrt{3}(5-4c^2)}K(k(\Lambda(c),c))}{\sqrt{\Lambda(c)\sqrt{\beta(\Lambda(c),c)}}}=L,$$
		where $c\in I_1$, $\alpha_4(c)=\Lambda(c)\in I_2$ and $k^2=k^2(c)\in (0,1)$ is given by \eqref{kfunctionalpha4},
		
		\item the dnoidal wave solution in \eqref{phix}, $\phi_c:=\phi(\cdot, \Lambda(c))$ determined by $\Lambda(c)$, has fundamental period $L$ and satisfies \eqref{ddedo1}. Moreover, the mapping
		$c\in I_1 \mapsto \phi_c \in H^n_{per}([0,L]),$
		is a smooth function for all $n\in \mathbb{N}$,
		
		\item The interval $I_1$ can be chosen as 
		$\left[0,\frac{1}{2}\sqrt{5-\frac{L^4}{(L^2-4\pi^2)^2}}\right).$
	\end{enumerate}
\end{theorem}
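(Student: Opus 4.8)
The plan is to obtain all three items from the strict monotonicity of $T$ in $\alpha_4$ proved in Lemma~\ref{teo01}, combined with the boundary behaviour of $T$ computed just before the statement, via the Implicit Function Theorem.

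For item (1), I would introduce $\mathcal{G}(\alpha_4,c):=T(\alpha_4,c)-L$ on $\Omega$. The period in \eqref{Bfunctionalpha4} is the composition of the complete elliptic integral $K$ with the modulus $k=k(\alpha_4,c)$ from \eqref{kfunctionalpha4}, all of which are smooth in their arguments throughout $\Omega$; hence $\mathcal{G}$ is smooth. By hypothesis $\mathcal{G}(\alpha_{4,0},c_0)=0$, while Lemma~\ref{teo01} gives $\partial_{\alpha_4}T(\alpha_{4,0},c_0)>0$, so $\partial_{\alpha_4}\mathcal{G}\neq 0$ at that point. The Implicit Function Theorem then supplies intervals $I_1\ni c_0$ and $I_2\ni\alpha_{4,0}$ and a unique smooth map $\Lambda:I_1\to I_2$ with $\Lambda(c_0)=\alpha_{4,0}$ and $T(\Lambda(c),c)=L$ for $c\in I_1$, which is exactly the asserted relation.

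Item (2) follows by composition. The explicit profile \eqref{phix} depends smoothly on $(\alpha_4,c)$ through $\sqrt{\alpha_4}$, the modulus $k$, and the scaling $\tfrac{2}{g\sqrt{3s}}$ in the argument of the Jacobi functions; substituting $\alpha_4=\Lambda(c)$ and invoking the smoothness of $\Lambda$ shows $c\mapsto\phi_c$ is smooth. Since $T(\Lambda(c),c)=L$, each $\phi_c$ has fundamental period $L$, and by the quadrature construction it solves \eqref{ddedo1}. Smoothness of $c\mapsto\phi_c$ into $H^n_{per}([0,L])$ for every $n$ is immediate because $\mathrm{sn}$ and $\mathrm{dn}$ are smooth and $L$-periodic, so every $x$-derivative is again $L$-periodic and depends smoothly on $c$.

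Item (3) is the global statement and the step needing the most care. For fixed $c$, Lemma~\ref{teo01} makes $\alpha_4\mapsto T(\alpha_4,c)$ a strictly increasing bijection of the $\alpha_4$-section $\left(\frac{-1+\sqrt{1+4r}}{2},\frac{-3+\sqrt{9+48r}}{4}\right)$ onto $\bigl(T_{\min}(c),+\infty\bigr)$, where the boundary analysis preceding the statement yields $T_{\min}(c)=\lim_{\alpha_4\to\frac{-1+\sqrt{1+4r}}{2}}T=2\pi\sqrt{\tfrac{\sqrt{s}}{\sqrt{s}-1}}$ and $T\to+\infty$ at the upper endpoint. A short computation shows that $T_{\min}(c)=L$ is equivalent to $\sqrt{s}=L^2/(L^2-4\pi^2)$, i.e.\ to $c=c^\ast:=\tfrac{1}{2}\sqrt{5-L^4/(L^2-4\pi^2)^2}$; moreover $s=5-4c^2$ is decreasing in $c$, so $T_{\min}(c)$ is strictly increasing in $c$, with $T_{\min}(0)=2\pi\sqrt{\tfrac{\sqrt5}{\sqrt5-1}}<L$ by the standing hypothesis on $L$. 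Consequently, for every $c\in[0,c^\ast)$ one has $L\in\bigl(T_{\min}(c),+\infty\bigr)$, so there is a unique $\alpha_4=\Lambda(c)$ with $T(\Lambda(c),c)=L$; because $\partial_{\alpha_4}T>0$ on all of $\Omega$, the local branches from item (1) patch together into a single smooth $\Lambda$ on $[0,c^\ast)$, and uniqueness forces them to agree on overlaps. The main obstacle is precisely pinning down $T_{\min}(c)$ in closed form and verifying its monotonicity, so that the threshold $c^\ast$ coincides exactly with the stated right endpoint.
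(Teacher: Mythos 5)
Your proposal is correct and follows essentially the same route as the paper: the Implicit Function Theorem applied to $T(\alpha_4,c)-L$, using the positivity of $\partial_{\alpha_4}T$ from Lemma~\ref{teo01}, with items (2) and (3) obtained by composition and by patching the local branches. In fact your treatment of item (3) is more careful than the paper's one-line extension argument, since you explicitly verify from the boundary limits of the period map that $T(\cdot,c)=L$ is solvable for every $c\in\bigl[0,\tfrac{1}{2}\sqrt{5-L^4/(L^2-4\pi^2)^2}\bigr)$ and that this threshold is exactly where $T_{\min}(c)=L$.
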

\begin{proof}
	The proof consists in applying the implicit function theorem. In fact, let us consider the open set $\Omega$ as in Lemma $\ref{teo01}$. By Lemma \ref{teo01}, we have that $\frac{\partial T}{\partial \alpha}>0$ for all $\alpha \in  \left(\frac{-1+\sqrt{1+4r}}{2}, \frac{-3+\sqrt{9+48r}}{4}\right)$. A simple application of the implicit function theorem enables us to deduce the existence of  an interval $I_1$ around $c_0$, an interval $I_2$ around $\alpha_{4,0}$ and a unique smooth function $\Lambda:I_1 \to I_2$ such that $\Lambda(c_0)=\alpha_{4,0}$ and $T(\Lambda(c),c)=L$ for all $c\in I_1$. We have then proved the first two items of the theorem.
		
		Since $c$ was chosen arbitrarily in the interval $\left(0,\frac{1}{2}\sqrt{5-\frac{L^4}{(L^2-4\pi^2)^2}}\right)$, the uniqueness of the function $\Lambda$ in terms of $c$ implies that the  $I_1$ can be extended to the whole interval $\left(0,\frac{1}{2}\sqrt{5-\frac{L^4}{(L^2-4\pi^2)^2}}\right)$.
\end{proof}

\begin{corollary}\label{corlambdac}
	Let $\Lambda:I_1 \to I_2$ be given by Theorem \ref{teo02}. Thus, $\Lambda$ is a strictly decreasing function in terms of $c$. 
\end{corollary}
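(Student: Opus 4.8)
The plan is to exploit the implicit characterization of $\Lambda$ established in Theorem \ref{teo02}, namely the identity $T(\Lambda(c),c)=L$ valid for every $c\in I_1$, and to differentiate it with respect to $c$. Since $\Lambda$ is smooth (Theorem \ref{teo02}, item (1)) and $T=T(\alpha_4,c)$ is smooth on $\Omega$, the chain rule applied to the constant function $c\mapsto T(\Lambda(c),c)\equiv L$ gives
\begin{equation}\label{chainrulelambda}
\frac{\partial T}{\partial \alpha_4}\big(\Lambda(c),c\big)\,\Lambda'(c)+\frac{\partial T}{\partial c}\big(\Lambda(c),c\big)=0.
\end{equation}
Because $\frac{\partial T}{\partial \alpha_4}>0$ on $\Omega$ (again by Lemma \ref{teo01}, which is precisely where the nondegeneracy of the implicit function was used), this partial derivative never vanishes along the curve $(\Lambda(c),c)$, so \eqref{chainrulelambda} may be solved for $\Lambda'(c)$ to obtain
\begin{equation}\label{lambdaprime}
\Lambda'(c)=-\frac{\dfrac{\partial T}{\partial c}\big(\Lambda(c),c\big)}{\dfrac{\partial T}{\partial \alpha_4}\big(\Lambda(c),c\big)}.
\end{equation}

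The conclusion then follows directly from the two sign statements of Lemma \ref{teo01}. Since $(\Lambda(c),c)\in\Omega$ for all $c\in I_1$, and Lemma \ref{teo01} guarantees that the period-map $T$ is strictly increasing both in $\alpha_4$ and in $c$, we have $\frac{\partial T}{\partial \alpha_4}(\Lambda(c),c)>0$ and $\frac{\partial T}{\partial c}(\Lambda(c),c)>0$ simultaneously. Substituting these two positive quantities into \eqref{lambdaprime} forces $\Lambda'(c)<0$ for every $c\in I_1$, whence $\Lambda$ is strictly decreasing.

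I expect no genuine obstacle in this argument: the entire analytical difficulty has already been discharged in Lemma \ref{teo01}, where the delicate sign analysis of $\frac{\partial T}{\partial \alpha_4}$ and $\frac{\partial T}{\partial c}$ (including the estimates reducing the problem to showing $\frac{\sqrt{3}\alpha\beta^{1/2}(\beta-4\alpha^2-2\alpha)}{\sigma}<1$ and $\omega<1$) was carried out. The corollary is therefore a short consequence of the implicit function theorem combined with the joint monotonicity of the period-map, and the only point meriting explicit mention is that both partial derivatives appearing in \eqref{lambdaprime} are evaluated along the admissible curve $(\Lambda(c),c)\in\Omega$, so that Lemma \ref{teo01} applies at each such point.
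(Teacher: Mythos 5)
Your argument is correct and coincides with the paper's own proof: both differentiate the identity $T(\Lambda(c),c)=L$ implicitly to obtain $\Lambda'(c)=-\frac{\partial T/\partial c}{\partial T/\partial \alpha_4}$ and then invoke the two positivity statements of Lemma \ref{teo01}. Your version merely spells out the chain rule and the nondegeneracy of $\partial T/\partial\alpha_4$ along the curve $(\Lambda(c),c)$ a bit more explicitly than the paper does.
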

\begin{proof}
	By Lemma \ref{teo01}, Theorem \ref{teo02} and the implicit function theorem, we can differentiate the equality $T(\Lambda(c),c)=L$ in terms of $c$ and to obtain
	$$\frac{\partial \Lambda}{\partial c}=-\frac{\frac{\partial T_\phi}{\partial c}}{\frac{\partial T_\phi}{\partial \alpha}}<0.$$
%

\end{proof}


\section{Characterization of all Positive and Periodic Waves.}

Let $c_0\in(0,1)$ be fixed. Consider $T$ the period-map defined in Lemma $\ref{teo01}$ but depending  only on $\alpha \in I_3=\left(\frac{-1+\sqrt{1+4r}}{2}, \frac{-3+\sqrt{9+48r}}{4}\right)$. According Lemma \ref{teo01}, we see that the period-map 
$$T: I_3 \rightarrow \left(\frac{2\pi \sqrt{s_0}}{(1+4r_0)^\frac{1}{4}\sqrt{\sqrt{1+4r_0}-1}}, +\infty \right),$$
is smooth and a strictly increasing function. As before, we have  $r_0=1-c_0^2$ and $s_0=5-4c^2_0$.

By $(\ref{Bfunctionalpha4})$, we see that the value of $B$ is given in terms of $\alpha\in I_3$ by
$$B=\frac{1}{5-4c_0^2}\left( \frac{\alpha^3}{3}+\frac{\alpha^2}{2}-(1-c_0^2)\alpha\right).$$
Next, we show that $\frac{\partial \alpha}{\partial B}>0$. In fact, the derivative of $B$ with respect to $\alpha$ is
$\frac{\partial B}{\partial \alpha}=\frac{1}{5-4c_0^2}\left(\alpha^2+\alpha-(1-c_0^2)\right).$
Then, the zeros of $\frac{\partial B}{\partial \alpha}$ are 
$\frac{-1-\sqrt{1+4(1-c_0^2)}}{2}$ and $\frac{-1+\sqrt{1+4(1-c_0^2)}}{2}.$
Since $s_0=5-4c_0^2>0$ and $\alpha \in \left(\frac{-1+\sqrt{1+4r_0}}{2}, \frac{-3+\sqrt{9+48r_0}}{4}\right)$, it follows that $B$ is strictly increasing in terms of $\alpha$ whose minimum and maximum values are given respectively by
$B\left(\frac{-1+\sqrt{1+4r_0}}{2}\right)=\frac{1-(5-4c_0^2)^{\frac{3}{2}}+6(1-c_0^2)}{12(5-4c_0^2)}=:B_{c_0}$
and 
$B\left(\frac{-3+\sqrt{9+48r}}{4} \right)=0.$ Thus, by the inverse function theorem we obtain 
$$\alpha: (B_{c_0},0) \rightarrow \left(\frac{-1+\sqrt{1+4r_0}}{2}, \frac{-3+\sqrt{9+48r_0}}{4}\right),$$
 is a smooth and $\frac{\partial \alpha}{\partial B}>0$.

Therefore, one has
$$\frac{\partial T}{\partial B}=\frac{\partial T}{\partial \alpha}\frac{\partial \alpha}{\partial B}>0,$$
that is, we have proved the following result.
\begin{proposition}\label{prop1}
	Let $c_0\in(0,1)$ be fixed. The period map
	$$T: (B_{c_0},0) \rightarrow \left(\frac{2\pi \sqrt{s_0}}{(1+4r_0)^\frac{1}{4}\sqrt{\sqrt{1+4r_0}-1}}, +\infty \right)$$
	is a smooth function in terms of $B\in(B_{c_0},0)$ and $\frac{\partial T}{\partial B}>0$.
\end{proposition}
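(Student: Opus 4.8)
The plan is to fix $c_0\in(0,1)$ and regard the period as a composite map $B\mapsto\alpha\mapsto T$, exploiting that the monotonicity $\partial T/\partial\alpha>0$ on $I_3$ has already been secured in Lemma \ref{teo01}. Thus the whole statement reduces to showing that $\alpha$ depends smoothly and strictly increasingly on the integration constant $B$; once this is in hand, the chain rule
$$\frac{\partial T}{\partial B}=\frac{\partial T}{\partial\alpha}\,\frac{\partial\alpha}{\partial B}$$
instantly gives both the smoothness of $T$ in $B$ and the positivity of its derivative.

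First I would use the last equation in \eqref{systemalpha} together with the representation \eqref{alpha3alpha4} of $\alpha_1,\alpha_3$ in terms of $\alpha_4=\alpha$ to write $B$ in the closed form displayed before the proposition, namely a cubic polynomial in $\alpha$ divided by $5-4c_0^2$. Differentiating yields $\partial B/\partial\alpha=(5-4c_0^2)^{-1}(\alpha^2+\alpha-(1-c_0^2))$, a quadratic whose roots are $\tfrac{-1\pm\sqrt{1+4r_0}}{2}$. The decisive observation is that the positive root $\tfrac{-1+\sqrt{1+4r_0}}{2}$ is exactly the left endpoint of $I_3$; since the leading coefficient is positive and $5-4c_0^2>0$, the quantity $\partial B/\partial\alpha$ is strictly positive for every $\alpha$ lying to the right of that root, hence throughout the open interval $I_3$. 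This makes $B$ a strictly increasing function of $\alpha$ on $I_3$.

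Next I would evaluate $B$ at the two endpoints, obtaining $B_{c_0}$ at the left endpoint and $0$ at the right endpoint, so that $B$ is a smooth bijection of $I_3$ onto $(B_{c_0},0)$. The inverse function theorem then yields a smooth inverse $\alpha:(B_{c_0},0)\to I_3$ with $\partial\alpha/\partial B=(\partial B/\partial\alpha)^{-1}>0$, and composing with the strictly increasing and smooth map $\alpha\mapsto T$ of Lemma \ref{teo01}, together with the endpoint limits of $T$ recorded just above that lemma, pins down both the range and the sign $\partial T/\partial B>0$. The only point deserving genuine care---and the step I would treat as the main obstacle---is the verification that the left endpoint of $I_3$ coincides precisely with the positive critical point of $B$: it is this exact matching that removes the critical point from the open interval, guarantees global monotonicity of $B$ on $I_3$, and thereby allows the inverse function theorem to be applied on the whole interval rather than merely locally.
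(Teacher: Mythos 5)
Your proposal is correct and follows essentially the same route as the paper: the paper also writes $B$ as the explicit cubic in $\alpha$, notes that the positive root $\tfrac{-1+\sqrt{1+4r_0}}{2}$ of $\partial B/\partial\alpha$ is the left endpoint of $I_3$ so that $B$ is strictly increasing from $B_{c_0}$ to $0$, inverts by the inverse function theorem, and concludes via the chain rule $\frac{\partial T}{\partial B}=\frac{\partial T}{\partial\alpha}\frac{\partial\alpha}{\partial B}>0$ using Lemma \ref{teo01}. No substantive differences.
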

\begin{flushright}
$\square$
\end{flushright}

\indent Next, as far as we know, we can use the standard ODE theory to conclude that for $\alpha\in I_3$, all positive even periodic waves satisfy the initial value problem (IVP), 
\begin{equation}\label{eq06}\left\{ \begin{array}{l}
-\phi''(x)+\frac{r_0}{s_0}\phi(x) -\frac{1}{s_0}(\phi^3(x)+\phi^5(x))=0,\\
\phi(0)=\sqrt{\alpha},\\
\phi'(0)=0.
\end{array}\right.\end{equation}

Proposition \ref{prop1} is then used to conclude the following result.

\begin{proposition}
Let $c_0\in(0,1)$ be fixed. All positive and even periodic solution associated to the equation \eqref{ddedo1} has the dnoidal profile given by \eqref{phix}.
\end{proposition}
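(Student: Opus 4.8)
The plan is to reduce the statement to the uniqueness of solutions of the initial value problem \eqref{eq06} and to exploit the monotonicity of the period map established in Proposition \ref{prop1}. First I would take an arbitrary positive, even, $L$-periodic solution $\phi$ of \eqref{ddedo1}. Multiplying by $-2\phi'$ and integrating exactly as in the derivation of \eqref{eqquadrature}, I obtain the first integral $(\phi')^2=\frac{r_0}{s_0}\phi^2-\frac{1}{2s_0}\phi^4-\frac{1}{3s_0}\phi^6+B$ for some constant $B=B(\phi)$, and the substitution $\Psi=\phi^2$ turns this into $(\Psi')^2=\frac{4}{3s_0}R(\Psi)$ with $R$ the quartic of \eqref{eqquadrature2}. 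Since $\phi$ is even we have $\phi'(0)=0$, so $x=0$ is a turning point of the motion and $\Psi(0)=\phi(0)^2$ is a positive root of $R$.

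Next I would analyse the phase portrait of $(\Psi')^2=\frac{4}{3s_0}R(\Psi)$. Because $\phi$ is positive, bounded and non-constant, the orbit $\Psi=\phi^2$ must oscillate between two consecutive simple positive roots $\alpha_3<\alpha_4$ of $R$ on which $R>0$ in the interior. Recalling that $R(\Psi)=\Psi\left(-\Psi^3-\frac{3}{2}\Psi^2+3r_0\Psi+3s_0B\right)$ always vanishes at $\Psi=0$, the two turning points $\alpha_3,\alpha_4$ are roots of the cubic factor, together with a third root $\alpha_1$. The relations \eqref{systemalpha} then give $\alpha_1+\alpha_3+\alpha_4=-\frac{3}{2}$ and $\alpha_1\alpha_3\alpha_4=3s_0B$; since $\alpha_3,\alpha_4>0$ and their sum is negative, the remaining root satisfies $\alpha_1<-\frac{3}{2}<0$, whence $B<0$. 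This is exactly the ordering $\alpha_1<\alpha_2=0<\alpha_3<\alpha_4$ used to build \eqref{phix}. I would further observe that the degenerate endpoint, where $\alpha_3$ and $\alpha_4$ collide at the center equilibrium $\frac{-1+\sqrt{1+4r_0}}{2}$, corresponds to $B=B_{c_0}$, so that a genuine positive periodic orbit forces $B\in(B_{c_0},0)$, precisely the range covered by Proposition \ref{prop1}.

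Finally, I would compare $\phi$ with the dnoidal profile \eqref{phix}. By Proposition \ref{prop1} the period map $B\mapsto T$ is a strictly increasing bijection from $(B_{c_0},0)$ onto the admissible range of periods, so there is a unique $B^\star\in(B_{c_0},0)$, equivalently a unique root $\alpha_4^\star$, for which the dnoidal solution \eqref{phix} has period $L$. By the previous paragraph $B(\phi)$ lies in this interval and realizes the same period $L$, hence $B(\phi)=B^\star$ and the two profiles share the turning point $\phi(0)^2=\alpha_4^\star$. Both $\phi$ and \eqref{phix} therefore solve the same initial value problem \eqref{eq06} with data $(\sqrt{\alpha_4^\star},0)$; since the nonlinearity in \eqref{ddedo1} is smooth, Picard uniqueness yields that $\phi$ coincides with \eqref{phix} (up to the translation that places the turning point at the origin).

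The main obstacle is the phase-plane bookkeeping of the second paragraph: one must rule out every competing root configuration, in particular orbits that reach $\Psi=0$ (which would make $\phi$ vanish and violate positivity) as well as non-oscillatory or unbounded trajectories, and confirm that positivity together with periodicity singles out the unique admissible ordering $\alpha_1<0<\alpha_3<\alpha_4$ with $B\in(B_{c_0},0)$. Once the root structure and the value of $B$ are pinned down, the strict monotonicity furnished by Proposition \ref{prop1} together with ODE uniqueness makes the identification with \eqref{phix} essentially automatic.
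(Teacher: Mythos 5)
Your proposal is correct and follows essentially the same route as the paper: reduce an arbitrary positive, even periodic solution to the initial value problem \eqref{eq06} via the first integral, then invoke the strict monotonicity of the period map $B\mapsto T$ from Proposition \ref{prop1} to pin down the energy level and conclude by ODE uniqueness. The only difference is one of detail — the paper compresses your phase-plane analysis of the root configuration of $R(\Psi)$ into a one-line appeal to ``standard ODE theory,'' whereas you spell out the bookkeeping that forces the ordering $\alpha_1<0<\alpha_3<\alpha_4$ and $B\in(B_{c_0},0)$.
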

\begin{proof}
	Let $L> 2\pi\sqrt{\frac{\sqrt{5}}{\sqrt{5}-1}}$ be fixed. By the monotonicity of the period map $T: (B_{c_0},0) \rightarrow \left(\frac{2\pi \sqrt{s_0}}{(1+4r_0)^\frac{1}{4}\sqrt{\sqrt{1+4r_0}-1}}, +\infty \right)$ determined in Proposition \ref{prop1}, there exists a unique $B_0 \in (B_{c_0},0)$ such that $T(B_0)=L$. Moreover, the monotonicity of $\alpha(B)$ in terms of $B$ so implies the existence of a unique $\alpha_0\in I_3$ such that $B_0=\alpha(\alpha_0)$. The proof then follows by Theorem \ref{teo02}.
\end{proof}

\section{Spectral Properties.}\label{sec4}

As mentioned in the introduction, the stability comes from the positivity of $\mathcal{D}$, except in one null and in one negative direction.  In this context the spectral analysis of $\mathcal{D}$ turns crucial for obtaining stability. On the other hand, considering 
$$A=\left(\begin{array}{cc}
	I & 0\\
	-cI& I
\end{array}\right),$$
it follows that the spectral analysis of $\mathcal{D}$ can be reduced to the analysis of $\mathcal{L}:=A^t\mathcal{D}A$, that is
\begin{equation}\mathcal{L}=\left(\begin{array}{cc}
		\mathcal{L}_1 & 0\\
		0& \mathcal{L}_2
	\end{array}\right) = \left( \begin{array}{cc}
		-s\partial_x^2+r-3\phi^2-5\phi^4& 0 \\
		0 & I-4\partial_x^2  \\
	\end{array}\right).\label{L-35i}
\end{equation}

In order of obtaining a characterization of the nonpositive spectrum of $\mathcal{L}$, given by $(\ref{L-35i})$, we recall some basic facts about Floquet's theory (for further details, see \cite{est} and \cite{magnus}). Let $Q$ be a smooth even $T$-periodic function.  Denote by $P$ the Hill operator defined in $L_{per}^2([0,T])$, with domain $D(P)=H_{per}^2([0,T])$ as
$$
P=-\partial_x^2+Q(x).
$$
As far as we know, the spectrum of $P$ is formed by an unbounded sequence of
real eigenvalues arranged as follows
\begin{equation}\label{seq}
	\lambda_0 < \lambda_1 \leq \lambda_2 < \lambda_3 \leq \lambda_4<
	\cdots\; < \lambda_{2n-1} \leq \lambda_{2n}\; \cdots,
\end{equation}
where the equality means that $\lambda_{2n-1} = \lambda_{2n}$  is a
double eigenvalue. According with the classical Oscillation Theorem, we see that the spectrum of $P$ can be characterized by the number of zeros
of the corresponding eigenfunctions. In fact, if $\varphi$ is an eigenfunction associated to either $\lambda_{2n-1}$ or $\lambda_{2n}$, then $\varphi$ has exactly
$2n$ zeros in the half-open
interval $[0, T)$. In particular, the even eigenfunction associated to the first eigenvalue $\lambda_0$ has no zeros in $[0, T]$.

Let $\varphi$ be a non-trivial $T$-periodic solution of the Hill equation
\begin{equation}\label{zeqL}
	-f''+Q(x)f=0.
\end{equation}
If $y$ is a solution of \eqref{zeqL} which is linearly independent with $\varphi$, there exists a constant $\theta$ (depending on $y$ and $\varphi$) such that
\begin{equation}\label{theta1}
	y(x+T)=y(x)+\theta \varphi(x).
\end{equation}
Thus, $\theta=0$ implies that $y$ is a periodic solution for $(\ref{zeqL}).$

Next result gives that it is possible to decide the exact position of the zero eigenvalue by knowing the precise sign of $\theta$ in $(\ref{theta1})$.

\begin{lemma}\label{specprop}
	Let $\theta$ be the constant given by (\ref{theta1}) and suppose that $\varphi$ is an $T-$periodic solution for the equation (\ref{zeqL}) containing only two zeros over $[0,T).$ The
	eigenvalue $\lambda=0$ is simple if and only if $ \theta \neq 0$.
	Moreover, if $\theta \neq 0$, then  $ \lambda_{1}=0$ if $\theta <
	0$, and $ \lambda_{2}=0$ if $\theta > 0$.
\end{lemma}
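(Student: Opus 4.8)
The plan is to deduce both assertions from Floquet theory after locating the eigenvalue $0$ by means of the Oscillation Theorem, the decisive quantity turning out to be the sign of the derivative of the Hill discriminant at $\lambda=0$. For the simplicity criterion I would argue as follows. Since $P=-\partial_x^2+Q$ is self-adjoint on $L^2_{per}([0,T])$, the multiplicity of the eigenvalue $0$ equals $\dim\ker(P)$, that is, the dimension of the space of $T$-periodic solutions of $(\ref{zeqL})$. This second order ODE has a two-dimensional solution space containing the $T$-periodic function $\varphi$, and a linearly independent solution $y$ is itself $T$-periodic precisely when $\theta=0$ in $(\ref{theta1})$. Hence $\dim\ker(P)=2$ when $\theta=0$ and $\dim\ker(P)=1$ when $\theta\neq0$, which is exactly the equivalence ``$\lambda=0$ is simple $\iff\theta\neq0$''. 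Moreover, since $\varphi$ has exactly two zeros in $[0,T)$, the Oscillation Theorem recalled above forces $0\in\{\lambda_1,\lambda_2\}$; in the degenerate case $\theta=0$ this reads $\lambda_1=\lambda_2=0$.

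For $\theta\neq0$ it remains to decide whether $0=\lambda_1$ or $0=\lambda_2$, and I would do this through the monodromy matrix $M(\lambda)$ of $-f''+Qf=\lambda f$ over one period, together with its discriminant $\Delta(\lambda)=\mathrm{tr}\,M(\lambda)$. From the classical band picture (see \cite{est} and \cite{magnus}), the $T$-periodic eigenvalues are the solutions of $\Delta(\lambda)=2$, and at a \emph{simple} periodic eigenvalue the sign of $\Delta'$ alternates: $\Delta'(\lambda_1)>0$ (a right band edge) while $\Delta'(\lambda_2)<0$ (the following left band edge). This alternation itself follows from $\Delta(\lambda)\to+\infty$ as $\lambda\to-\infty$ and the interlacing of the periodic and antiperiodic eigenvalues. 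Consequently it suffices to compute the sign of $\Delta'(0)$.

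The crucial step is the identity
\[ \Delta'(0)=-\frac{\theta}{W[\varphi,y]}\int_0^T\varphi(x)^2\,dx, \]
where $W[\varphi,y]=\varphi y'-\varphi' y$ is the constant Wronskian. To prove it I would write $(\ref{zeqL})$ as a first order system with coefficient matrix $A(\lambda)$, use $\partial_\lambda M(\lambda)=M(\lambda)\int_0^T\Phi(x,\lambda)^{-1}(\partial_\lambda A)\Phi(x,\lambda)\,dx$ for the fundamental matrix $\Phi$, and then evaluate at $\lambda=0$ in the frame built from $\{\varphi,y\}$. In that frame the monodromy is the Jordan block $\left(\begin{smallmatrix}1&\theta\\0&1\end{smallmatrix}\right)$, and upon taking the trace every contribution cancels except the term producing $-\theta\int_0^T\varphi^2$ (divided by $W$). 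Since $\int_0^T\varphi^2>0$, this gives $\mathrm{sign}\,\Delta'(0)=-\mathrm{sign}\,\theta$ once the orientation $W[\varphi,y]>0$ is fixed. Combining with the band-edge signs yields $\theta<0\Rightarrow\Delta'(0)>0\Rightarrow 0=\lambda_1$ and $\theta>0\Rightarrow\Delta'(0)<0\Rightarrow 0=\lambda_2$, which is precisely the assertion.

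The main obstacle is this last identity and its sign bookkeeping: one must track signs carefully through the trace computation and, crucially, observe that the sign of $\theta$ is meaningful only after fixing the orientation of $W[\varphi,y]$. Indeed, replacing $y$ by $-y$ flips $\theta$ but simultaneously flips $W$, so it is the ratio $\theta/W$ governing $\Delta'(0)$ that is well defined; this is the point at which the normalization implicit in the statement must be made explicit.
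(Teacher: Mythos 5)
The paper does not actually prove this lemma: its ``proof'' is the single line ``See \cite{natali1} and \cite{neves}.'' Your argument is a correct reconstruction of what those references do, so in substance you have supplied the proof the paper outsources. The simplicity claim via $\dim\ker(P)\in\{1,2\}$ and the localization $0\in\{\lambda_1,\lambda_2\}$ via the Oscillation Theorem are exactly right, and your key identity checks out: writing $\Phi=\bigl(\begin{smallmatrix}\varphi&y\\ \varphi'&y'\end{smallmatrix}\bigr)$, the monodromy in this frame is $\bigl(\begin{smallmatrix}1&\theta\\0&1\end{smallmatrix}\bigr)$, and the variation-of-constants formula for $\partial_\lambda M$ gives
$\Delta'(0)=\tfrac{1}{W}\bigl(\int_0^T\varphi y\,dx-\theta\int_0^T\varphi^2\,dx-\int_0^T\varphi y\,dx\bigr)=-\tfrac{\theta}{W}\int_0^T\varphi^2\,dx$,
which combined with $\Delta'(\lambda_1)>0$, $\Delta'(\lambda_2)<0$ at simple band edges yields the stated dichotomy. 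Your closing caveat is the one point worth stressing: as written, the lemma's sign condition on $\theta$ is not invariant under $y\mapsto -y$ or $\varphi\mapsto-\varphi$, and only the ratio $\theta/W[\varphi,y]$ is intrinsic; the normalization $W[\varphi,y]>0$ must be imposed for the statement to make sense. This is consistent with how the paper later uses the lemma --- in Lemma \ref{teo13} the second solution is chosen with $W(\phi',\bar y)=1$ --- but it is left implicit in the statement, and your proof makes it explicit. No gaps.
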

\begin{proof}
	See \cite{natali1} and \cite{neves}.
\end{proof}

\indent Before giving the behaviour of the non-positive spectrum of $\mathcal{L}_1$, we need some preliminary tools. First of all for the solution $\phi$ obtained by Theorem $\ref{teo02}$, we have that $\phi'$ solves the Hill equation $\mathcal{L}_1\phi'=0$ just by deriving $(\ref{ddedo})$ with respect to $x$. In addition, the dnoidal solution in $(\ref{phix})$ is positive and $\phi'$ is an odd function having two zeros in the interval $[0,L)$. By the classical Floquet theory, we see that $\lambda_1=0$ or $\lambda_2=0$. We show that $\lambda_1=0$ and it results to be simple. To this end, we first present the basica lemma
\begin{lemma} \label{teo13}
	We have, $\frac{dT}{dB}=-\frac{\theta}{2}$, where $\theta$ is the constant in (\ref{theta1}).
\end{lemma}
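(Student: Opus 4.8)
The plan is to produce an explicit fundamental system for the Hill equation $\mathcal{L}_1 f = 0$ and then to read off $\theta$ from the way the non-periodic solution of that system is deformed when the integration constant $B$ is varied. As already noted in the text, $\varphi := \phi'$ is a $T$-periodic solution of $\mathcal{L}_1 f = 0$, obtained by differentiating \eqref{ddedo1} in $x$. The decisive structural observation is that the second-order ODE \eqref{ddedo1} does \emph{not} contain $B$; the constant $B$ enters only through the first-order quadrature \eqref{eqquadrature}. Hence, regarding $\phi = \phi(x;B)$ as the solution of the IVP \eqref{eq06} whose turning-point data $\phi(0)=\sqrt{\alpha(B)},\ \phi'(0)=0$ depend on $B$, and differentiating \eqref{ddedo1} with respect to $B$, I obtain $\mathcal{L}_1(\partial_B\phi)=0$. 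Thus $y:=\partial_B\phi$ is a second solution of the Hill equation, and the pair $\{\varphi,y\}$ is the fundamental system I will use.

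Next I would compute the Wronskian $W(\varphi,y)=\varphi y'-\varphi' y$, which is constant in $x$, by evaluating it at $x=0$. Evaluating the quadrature \eqref{eqquadrature} at the turning point gives $\frac{r}{s}a^2-\frac{1}{2s}a^4-\frac{1}{3s}a^6+B=0$ with $a=\phi(0)=\sqrt{\alpha(B)}$; differentiating this relation in $B$ and using that its $\phi$-derivative coefficient equals $2\phi''(0)$ by \eqref{ddedo1} evaluated at $x=0$, I get $y(0)=\partial_B\phi(0)=a'(B)=-\frac{1}{2\phi''(0)}$, while $y'(0)=\partial_B(\phi'(0))=0$. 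Therefore $W(\varphi,y)=\varphi(0)y'(0)-\varphi'(0)y(0)=-\phi''(0)\cdot\left(-\frac{1}{2\phi''(0)}\right)=\frac12\neq 0$, which both confirms independence of $\varphi$ and $y$ and, as I explain below, is precisely the origin of the factor $\tfrac12$ in the statement.

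Then I would derive the monodromy relation by differentiating the periodicity identity $\phi(x+T(B);B)=\phi(x;B)$ with respect to $B$. Using that $\phi'$ is $T$-periodic, the chain rule yields $\phi'(x)\,T'(B)+\partial_B\phi(x+T)=\partial_B\phi(x)$, that is $y(x+T)=y(x)-T'(B)\,\varphi(x)$. To match this with the defining relation \eqref{theta1}, in which the second solution is normalized so that $\{\varphi,\cdot\}$ has Wronskian one (the canonical normalization used in \cite{natali1,neves}), I replace $y$ by $2y$, whose Wronskian with $\varphi$ is $1$ and which satisfies $2y(x+T)=2y(x)-2T'(B)\,\varphi(x)$. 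Comparing with $y(x+T)=y(x)+\theta\varphi(x)$ then gives $\theta=-2\,\frac{dT}{dB}$, i.e. $\frac{dT}{dB}=-\frac{\theta}{2}$.

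The main obstacle is the normalization bookkeeping: \eqref{theta1} fixes $\theta$ only once the second solution is prescribed, and the factor $\tfrac12$ in the statement is entirely attributable to $W(\varphi,\partial_B\phi)=\tfrac12$ rather than $1$; the raw deformation $\partial_B\phi$ would give coefficient $-T'(B)$, so one must carry the Wronskian through to recover the correct constant. A secondary point requiring care is the legitimacy of the $B$-differentiation: one must differentiate the $B$-independent equation \eqref{ddedo1} (so that $\partial_B\phi$ genuinely lands in $\ker\mathcal{L}_1$) together with the $B$-dependent turning-point data, rather than differentiating the quadrature \eqref{eqquadrature} directly, and one must invoke the smoothness of $c\mapsto\alpha(B)$ and of the wave established earlier to ensure $\partial_B\phi$ is a well-defined classical solution.
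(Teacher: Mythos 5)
Your proof is correct and follows essentially the same route as the paper: both identify the second solution of $\mathcal{L}_1 f=0$ as (twice) $\partial_B\phi$, use $\partial_B\phi(0)=-\tfrac{1}{2\phi''(0)}$ to fix the Wronskian normalization at $1$, and extract $\theta=-2\,\tfrac{dT}{dB}$ from a $B$-differentiated periodicity identity. The only cosmetic difference is that you differentiate $\phi(x+T(B);B)=\phi(x;B)$ globally to read off the monodromy relation, whereas the paper differentiates $\phi'(T)=0$ at the endpoint and invokes uniqueness of the IVP to identify $y=2\,\partial_B\phi$; the substance is identical.
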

\begin{proof}
	In a general setting, consider $\{\phi',y\}$ a fundamental set of solutions for the Hill equation $\mathcal{L}y=0$, where $y\in C^{\infty}([0,T])$. Thus, we have that $\bar{y}$ and $\phi'$ satisfies the equation $(\ref{zeqL})$. In addition, the Wronskian $W$ associated to fundamental set satisfies $W(\phi'(x),\bar{y}(x))=1$ for all $x\in[0,T]$. Since $\phi'$ is odd and periodic, we obtain that $y$ is even and it satisfies the following IVP
	\begin{equation}\left\{
		\begin{array}{l}
			\displaystyle -(5-4c^2)y''+ (1-c^2) y-3\phi^2y-5\phi^4y = 0 ,\\
		 y(0) =-\frac{1}{\phi''(0)}, \\
			y'(0)= 0.
		\end{array} \right.
		\label{y}
	\end{equation}	
	The smoothness of $\phi'$ in terms of the parameter $B$ enables us to take the derivative of $\phi'(T)=0$ with respect to $B$ to obtain
	\begin{equation}\label{eq12343}
		\phi''(T)\frac{dT}{dB}+\frac{\partial \phi'}{\partial B}(T)=0.\end{equation}
	\indent
	Deriving equation $(\ref{eqquadrature})$ with respect to $B$ and taking $x=0$ in the final result, we obtain by $(\ref{ddedo})$ at the point $x=0$ that
	$\frac{\partial \phi}{\partial B}(0)=-\frac{1}{ 2\phi''(0)}$. In addition, since $\phi'$ is odd one has that $\frac{\partial \phi'}{\partial B}$ is also odd and thus, $\frac{\partial \phi'}{\partial B}(0)=0$. The existence and uniqueness theorem for classical ODE applied to the problem $(\ref{y})$ enables us to deduce that $y=2\frac{\partial \phi}{\partial B}$. Therefore, we can combine $(\ref{theta1})$ with $(\ref{eq12343})$ to obtain that $\frac{dT}{dB}=-\frac{\theta}{2}$.
\end{proof}

\indent By Theorem $\ref{teo02}$, Lemma $\ref{specprop}$, and Lemma $\ref{teo13}$, we obtain the following result about the non-positive spectrum of $\mathcal{L}$.
\begin{lemma}\label{lema167}
	Let $L > 2\pi\sqrt{\frac{\sqrt{5}}{\sqrt{5}-1}}$ be fixed and consider  
	$c\in \left(0,\frac{1}{2}\sqrt{5-\frac{L^4}{(L^2-4\pi^2)^2}}\right)$. Operator
	$\mathcal{L}_1$
	in defined in $L_{per}^2$ with domain in
	$H_{per}^2$ has a unique negative eigenvalue
	which is simple and zero is a simple eigenvalue with eigenfunction
	$\phi'$.  Moreover, the remainder of the spectrum of $\mathcal{L}_1$
	is constituted by a discrete set
	of eigenvalues bounded away from zero.
\end{lemma}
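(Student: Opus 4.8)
The plan is to reduce the eigenvalue problem for $\mathcal{L}_1$ to that of a standard Hill operator and then read off the non-positive spectrum from Floquet/oscillation theory, using the sign of the constant $\theta$ to pin down the exact position of the zero eigenvalue. First I would observe that, since $s=5-4c^2>0$ on the admissible range of $c$ (indeed $c<\tfrac{1}{2}\sqrt{5}$ forces $4c^2<5$), the operator factors as $\mathcal{L}_1=s\,P$ with $P=-\partial_x^2+Q$ and $Q=\frac{1}{s}\left(r-3\phi^2-5\phi^4\right)$. Because $s>0$, the operators $\mathcal{L}_1$ and $P$ share the same eigenfunctions, and their eigenvalues have identical signs and multiplicities, so the non-positive spectrum of $\mathcal{L}_1$ is inherited from the Hill operator $P$, to which the classical Oscillation Theorem recalled around \eqref{seq} applies. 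The existence of a smooth dnoidal wave $\phi$ with fundamental period $L$ on the stated range of $(L,c)$ is guaranteed by Theorem \ref{teo02}.

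The key structural input is that $\phi'$ solves $\mathcal{L}_1\phi'=0$ (differentiate \eqref{ddedo} in $x$), so $0\in\sigma(\mathcal{L}_1)$ with eigenfunction $\phi'$. Since $\phi$ is positive and $L$-periodic, the odd function $\phi'$ has exactly two zeros in $[0,L)$; by the Oscillation Theorem this forces $0$ to be either $\lambda_1$ or $\lambda_2$ in the ordering \eqref{seq}. To decide which, I would invoke Lemma \ref{specprop}: the zero eigenvalue is simple if and only if $\theta\neq0$, with $\lambda_1=0$ when $\theta<0$ and $\lambda_2=0$ when $\theta>0$. The sign of $\theta$ is then obtained by combining Lemma \ref{teo13}, which gives $\frac{dT}{dB}=-\frac{\theta}{2}$, with Proposition \ref{prop1}, which asserts $\frac{\partial T}{\partial B}>0$; together these yield $\theta=-2\frac{dT}{dB}<0$.

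With $\theta<0$ in hand, Lemma \ref{specprop} gives $\lambda_1=0$ and that $0$ is simple. In the ordering \eqref{seq}, simplicity of $\lambda_1$ means $\lambda_1<\lambda_2$, so $\lambda_2>0$, while the first eigenvalue $\lambda_0$ is always simple with a non-vanishing eigenfunction; hence $\lambda_0<\lambda_1=0$ is the unique negative eigenvalue and it is simple. The remaining eigenvalues $\lambda_2\le\lambda_3\le\cdots$ are strictly positive, form a discrete set accumulating only at $+\infty$, and are therefore bounded away from zero by $\lambda_2>0$. Transporting this picture back to $\mathcal{L}_1=sP$ with $s>0$ preserves signs and multiplicities and yields exactly the claimed description, with $\phi'$ as the eigenfunction at the simple eigenvalue $0$.

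The only genuinely delicate point is the determination of the sign of $\theta$, i.e.\ distinguishing $\lambda_1=0$ from $\lambda_2=0$: a bare zero-counting argument only localizes $0$ to the pair $\{\lambda_1,\lambda_2\}$. The decisive ingredient is the identity $\frac{dT}{dB}=-\frac{\theta}{2}$ of Lemma \ref{teo13}, which converts the purely spectral question into the monotonicity of the period map already established in Proposition \ref{prop1}; everything else is a bookkeeping application of the Oscillation Theorem together with Lemma \ref{specprop}. I would also take care to record explicitly that the normalization by $s>0$ leaves the count of negative and zero eigenvalues unchanged, and that Theorem \ref{teo02} supplies a wave of period exactly $L$ on the prescribed parameter range, so that the whole argument is non-vacuous.
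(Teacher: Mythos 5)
Your proposal is correct and follows essentially the same route as the paper: identify $\phi'$ as an odd kernel element with two zeros in $[0,L)$, use the Oscillation Theorem to localize $0$ to $\{\lambda_1,\lambda_2\}$, and then combine Lemma \ref{teo13} ($\theta=-2\,dT/dB$) with the monotonicity of the period map from Proposition \ref{prop1} to get $\theta<0$, so that Lemma \ref{specprop} yields $\lambda_1=0$ simple and hence $\lambda_0<0$ is the unique (simple) negative eigenvalue. Your explicit remark that the normalization $\mathcal{L}_1=sP$ with $s=5-4c^2>0$ preserves signs and multiplicities is a small point the paper leaves implicit, but it is not a different argument.
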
\vspace{-16pt}
\begin{flushright}
	$\square$
\end{flushright}

\indent Since $\mathcal{L}_2$ in $(\ref{L-35i})$ is positive, we conclude by Lemma $\ref{lema167}$ and the fact that $\mathcal{L}=A^{t}\mathcal{D}A$ the following result.

\begin{lemma}\label{lema1678}
	Let $L > 2\pi\sqrt{\frac{\sqrt{5}}{\sqrt{5}-1}}$ be fixed and consider  
	$c\in \left(0,\frac{1}{2}\sqrt{5-\frac{L^4}{(L^2-4\pi^2)^2}}\right)$. Operator
	$\mathcal{D}$
	in defined in $\mathbb{L}_{per}^2$ with domain in
	$\mathbb{H}_{per}^2$ has a unique negative eigenvalue
	which is simple and zero is a simple eigenvalue with eigenfunction
	$(\phi',-c\phi')$.  Moreover, the remainder of the spectrum of $\mathcal{D}$
	is constituted by a discrete set
	of eigenvalues bounded away from zero.
\end{lemma}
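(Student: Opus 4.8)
The plan is to transfer the complete spectral picture of the block-diagonal operator $\mathcal{L}$ in \eqref{L-35i}, whose nonpositive spectrum is determined by Lemma \ref{lema167} together with the positivity of $\mathcal{L}_2=I-4\partial_x^2$, onto the operator $\mathcal{D}$ by means of the congruence $\mathcal{L}=A^t\mathcal{D}A$. The decisive structural fact is that $A$ has constant, derivative-free entries, so it is a bounded linear isomorphism of $\mathbb{L}_{per}^2$ onto itself whose inverse $A^{-1}=\left(\begin{array}{cc} I & 0\\ cI & I\end{array}\right)$ is also bounded, and both $A$ and $A^{-1}$ map the common domain $\mathbb{H}_{per}^2$ bijectively onto itself (and likewise preserve the form domain $\mathbb{H}_{per}^1$). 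Consequently $\mathcal{D}=(A^{-1})^t\mathcal{L}A^{-1}$, and the two self-adjoint operators are congruent through a topological isomorphism.

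First I would compute $\ker(\mathcal{D})$. Since $A^t$ is invertible, the equation $\mathcal{D}v=0$ is equivalent to $\mathcal{L}A^{-1}v=0$, that is $A^{-1}v\in\ker(\mathcal{L})$, whence $\ker(\mathcal{D})=A\,\ker(\mathcal{L})$. Because $\mathcal{L}$ is block-diagonal, $\ker(\mathcal{L})=\ker(\mathcal{L}_1)\times\ker(\mathcal{L}_2)$; by Lemma \ref{lema167} one has $\ker(\mathcal{L}_1)=[\phi']$, while $\mathcal{L}_2=I-4\partial_x^2$ is strictly positive with spectrum contained in $[1,+\infty)$, so $\ker(\mathcal{L}_2)=\{0\}$. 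Applying $A$ to the generator $(\phi',0)$ gives $A(\phi',0)^t=(\phi',-c\phi')$, so $\ker(\mathcal{D})=[(\phi',-c\phi')]$ is one-dimensional, i.e. $0$ is a simple eigenvalue with the stated eigenfunction.

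Next I would count the negative eigenvalues through the variational characterization $n(\mathcal{A})=\max\{\dim V:\ \langle \mathcal{A}w,w\rangle<0 \text{ for all } w\in V\setminus\{0\}\}$. For $w$ in the domain one has $\langle\mathcal{L}w,w\rangle=\langle A^t\mathcal{D}Aw,w\rangle=\langle\mathcal{D}(Aw),Aw\rangle$, so the isomorphism $A$ sends every subspace on which $\mathcal{L}$ is negative definite bijectively onto a subspace of equal dimension on which $\mathcal{D}$ is negative definite, and conversely via $A^{-1}$. This is Sylvester's law of inertia in the present Hilbert-space setting, and it yields $n(\mathcal{D})=n(\mathcal{L})=n(\mathcal{L}_1)+n(\mathcal{L}_2)=1+0=1$, so $\mathcal{D}$ has a single, simple negative eigenvalue.

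Finally, for the remainder of the spectrum I would observe that $\mathcal{L}_1$ and $\mathcal{L}_2$ are Hill-type operators with compact resolvent, so $\mathcal{L}$ has compact resolvent, and since $A$ is a bounded isomorphism, $\mathcal{D}=(A^{-1})^t\mathcal{L}A^{-1}$ inherits this property. Hence $\sigma(\mathcal{D})$ is purely discrete, with finite-multiplicity eigenvalues accumulating only at $+\infty$; combined with $n(\mathcal{D})=1$ and $\dim\ker(\mathcal{D})=1$, this forces the rest of the spectrum to be strictly positive and bounded away from $0$. The main obstacle I anticipate is the rigorous justification of the inertia count in infinite dimensions: I must verify that $A$ genuinely preserves both the operator domain and the form domain, so that the min-max subspaces for $\mathcal{L}$ and $\mathcal{D}$ are genuinely comparable and the equality $n(\mathcal{D})=n(\mathcal{L})$ neither loses nor creates a dimension. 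This is precisely where the boundedness of both $A$ and $A^{-1}$ and the identity $D(\mathcal{D})=A\,D(\mathcal{L})=\mathbb{H}_{per}^2$ must be invoked carefully.
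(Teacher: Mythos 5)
Your proposal is correct and follows essentially the same route as the paper, which proves the lemma exactly by combining the congruence $\mathcal{L}=A^{t}\mathcal{D}A$ with the spectral data of Lemma \ref{lema167} and the positivity of $\mathcal{L}_2$. You simply make explicit the details (kernel transport via $A$, Sylvester inertia, compact resolvent) that the paper leaves implicit.
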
\vspace{-16pt}
\begin{flushright}
	$\square$
\end{flushright}

\section{Spectral Stability Results - Proof of Theorem $\ref{mainT}$.}

In this section we present our spectral stability result and consequently, the proof of Theorem $\ref{mainT}$. To ensure the existence of the pair $(p,q)$ in $(\ref{changevar1})$, it is necessary to establish the existence of a local solution $(u,v)$ for the Cauchy problem associated with the system $(\ref{sysdde})$ 

\begin{equation}\label{cauchysysdde}
	\left\{\begin{array}{llll}u_t=v_x,\\
		v_t=(I-4\partial_x^2)^{-1}\partial_x((I-5\partial_x^2)u-(u^3+u^5))\ \mbox{in}\ [0,L]\times (0,+\infty),\\
		(u(x,0),v(x,0))=(u_0(x),v_0(x))\ \mbox{in}\ [0,L].
	\end{array}\right.\end{equation}

 In fact, we have the following result 

\begin{lemma} There exists a time $t_0>0$ such that for all initial $(u_0,v_0)\in \mathbb{H}_{per}^1$ the Cauchy problem in $(\ref{cauchysysdde})$ has a unique solution $(u,v)$ defined in $C([0, t_0),\mathbb{H}_{per}^1)$. In addition, the pair $(u,v)$ satisfies the conserved quantities in $(\ref{E})-(\ref{H})$.	
	\end{lemma}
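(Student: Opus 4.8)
The plan is to recast (\ref{cauchysysdde}) as an abstract semilinear Cauchy problem and solve it by a contraction-mapping (Picard) argument in Duhamel form, in the spirit of the semigroup theory of \cite{Iorio}. Writing $W=(u,v)$, the system reads $W_t=\mathcal{T}W+\mathcal{N}(W)$, where the linear generator is
\[
\mathcal{T}=\begin{pmatrix} 0 & \partial_x \\ (I-4\partial_x^2)^{-1}\partial_x(I-5\partial_x^2) & 0 \end{pmatrix},
\]
and the nonlinear part is $\mathcal{N}(W)=\bigl(0,\,-(I-4\partial_x^2)^{-1}\partial_x(u^3+u^5)\bigr)$. First I would show that $\mathcal{T}$ generates a uniformly bounded $C_0$-group on $\mathbb{H}_{per}^1$, then that $\mathcal{N}$ is locally Lipschitz on $\mathbb{H}_{per}^1$, and finally run the fixed-point argument on the integral equation $W(t)=e^{t\mathcal{T}}W_0+\int_0^t e^{(t-\tau)\mathcal{T}}\mathcal{N}(W(\tau))\,d\tau$ in $C([0,t_0],\mathbb{H}_{per}^1)$ for $t_0$ small.

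For the linear group I would exploit the constant-coefficient structure and diagonalize $\mathcal{T}$ with the periodic Fourier transform. On the $k$-th mode $\mathcal{T}$ acts as the matrix $M_k=\begin{pmatrix} 0 & ik \\ ik(1+5k^2)/(1+4k^2) & 0 \end{pmatrix}$, whose eigenvalues are $\pm i|k|\sqrt{(1+5k^2)/(1+4k^2)}$, hence purely imaginary for every $k$. Since the two off-diagonal symbols are comparable as $|k|\to\infty$ (their ratio stays in a compact subinterval of $(0,\infty)$), the $M_k$ are uniformly diagonalizable, so $e^{tM_k}$ is bounded uniformly in $k$ and $t$; equivalently, the linearized energy furnishes an equivalent norm on $\mathbb{H}_{per}^1$ under which $e^{t\mathcal{T}}$ is an isometry. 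This produces a bounded $C_0$-group on each $\mathbb{H}_{per}^s$, in particular for $s=1$.

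Next I would check that $\mathcal{N}$ maps $\mathbb{H}_{per}^1$ into itself and is locally Lipschitz. Because $H^1_{per}([0,L])$ is a Banach algebra (we are in one space dimension, $1>\tfrac12$), the polynomial map $u\mapsto u^3+u^5$ is locally Lipschitz from $H^1_{per}$ into $H^1_{per}$; moreover the Fourier multiplier $(I-4\partial_x^2)^{-1}\partial_x$, whose symbol $ik/(1+4k^2)$ is bounded (indeed smoothing of one order), is bounded on $H^1_{per}$. Composing, $\mathcal{N}$ is locally Lipschitz on $\mathbb{H}_{per}^1$, so for $t_0$ small the Duhamel map is a contraction on a ball of $C([0,t_0],\mathbb{H}_{per}^1)$, giving a unique local solution; the standard blow-up alternative then extends it to a maximal interval.

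Finally, for the conserved quantities, $G$ and $H$ are immediate: integrating each equation of (\ref{cauchysysdde}) over a period, both right-hand sides are exact $x$-derivatives of $L$-periodic functions and hence have zero mean, so $\tfrac{d}{dt}G=\tfrac{d}{dt}H=0$. For $E$ and $F$ I would use the Hamiltonian form (\ref{ham}): formally $\tfrac{d}{dt}E=\langle E'(W),J\,E'(W)\rangle=0$ because $J$ is skew-adjoint, and similarly $\tfrac{d}{dt}F=0$. The delicate point — which I expect to be the main obstacle — is justifying these identities at the low regularity $\mathbb{H}_{per}^1$, where the integrations by parts hidden in the Hamiltonian computation are not a priori licit. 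I would resolve this by approximating $(u_0,v_0)$ by smooth data, proving conservation for the resulting smooth solutions where all manipulations are valid, and passing to the limit using the continuous dependence furnished by the fixed-point construction together with the continuity of $E$ and $F$ on $\mathbb{H}_{per}^1$.
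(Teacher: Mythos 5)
Your proposal is correct in outline, but it is worth knowing that the paper does not actually prove this lemma: it simply asserts that the argument is ``in the same spirit'' as Section~2 of \cite{wang} (a nonlinear Boussinesq equation) and omits all details. Your write-up therefore supplies what the paper leaves out, and it does so by the standard route one would expect: Duhamel formulation, a uniformly bounded $C_0$-group for the linear part obtained by Fourier diagonalization (your observation that the symbols $i\xi$ and $i\xi(1+5\xi^2)/(1+4\xi^2)$ are comparable, so that the linearized energy $\sum_k\bigl[(1+5\xi^2)|\hat u|^2+(1+4\xi^2)|\hat v|^2\bigr]$ gives an equivalent norm in which the group is an isometry, is exactly the right mechanism), the Banach-algebra property of $H^1_{per}$ in one dimension combined with the smoothing multiplier $(I-4\partial_x^2)^{-1}\partial_x$ for the local Lipschitz bound, and a regularization argument for the conservation laws. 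Two small points deserve attention. First, the contraction argument yields a time $t_0$ depending on $\|(u_0,v_0)\|_{\mathbb{H}^1_{per}}$, not a single $t_0$ valid for all data as the lemma literally states; this appears to be an imprecision in the paper's formulation rather than a defect of your proof. Second, in the approximation step for $E$ and $F$ you should make explicit the persistence-of-regularity claim: a solution with $\mathbb{H}^s_{per}$ data ($s\ge 2$) remains in $\mathbb{H}^s_{per}$ on the whole $\mathbb{H}^1_{per}$ existence interval, which follows from running the same fixed-point scheme in $\mathbb{H}^s_{per}$ and noting that the $\mathbb{H}^s_{per}$ norm is controlled by a Gronwall estimate as long as the $\mathbb{H}^1_{per}$ norm stays bounded. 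With that detail added, your argument is complete and self-contained, which is more than the paper provides.
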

\begin{proof}
	The proof of this result has the same spirit as in \cite[Section 2]{wang} and because of this, we omit the details.
\end{proof}

Next, we need to calculate the difference 
$n(\mathcal{D}_{\Pi})-n(\mathcal{P})$, where
$\mathcal{D}_{\Pi}$ is defined as in $(\ref{operatorproj})$. 
 Again and to improve the comprehension of the reader, matrix $\mathcal{P}$ is given by 
 $$
\mathcal{P}=\left(\begin{array}{llll}\langle \mathcal{D}^{-1}(-c\chi,\chi),(-c\chi,\chi)\rangle_{\mathbb{L}_{per}^2} & \langle \mathcal{D}^{-1}(1,0),(-c\chi,\chi)\rangle_{\mathbb{L}_{per}^2} & \langle \mathcal{D}^{-1}(0,1),(-c\chi,\chi)\rangle_{\mathbb{L}_{per}^2}\\\\
	\langle \mathcal{D}^{-1}(-c\chi,\chi),(1,0)\rangle_{\mathbb{L}_{per}^2} & \langle \mathcal{D}^{-1}(1,0),(1,0)\rangle_{\mathbb{L}_{per}^2} & \langle \mathcal{D}^{-1}(0,1),(1,0)\rangle_{\mathbb{L}_{per}^2}\\\\
	\langle \mathcal{D}^{-1}(-c\chi,\chi),(0,1)\rangle_{\mathbb{L}_{per}^2} & \langle \mathcal{D}^{-1}(1,0),(0,1)\rangle_{\mathbb{L}_{per}^2} & \langle \mathcal{D}^{-1}(0,1),(0,1)\rangle_{\mathbb{L}_{per}^2}\end{array}\right),
$$
where $\chi=4\phi''-\phi$.\\
\indent In order to calculate $n(\mathcal{D}_{\Pi})$, we need to use the Index Theorem for self-adjoint operators  (see \cite[Theorem 5.3.2]{kapitula} that gives a precise counting of the spectral information concerning $\mathcal{D}_{\Pi}$ in terms of the spectral properties associated to $\mathcal{D}$. More precisely, since $\ker(\mathcal{D})=[(\phi',-c\phi')]$, we have 
\begin{equation}\label{indexformula12}
	\text{n}(\mathcal{D}_{{\Pi}})=\text{n}(\mathcal{D})-n_0-z_0
\end{equation}
and
\begin{equation}\label{indexformula123}
	\text{z}(\mathcal{D}_{{\Pi}})=\text{z}(\mathcal{D})+z_0,
\end{equation} 
where $\text{z}(\mathcal{A})$ indicates the dimension of the kernel of a certain linear operator $\mathcal{A}$. Parameters $n_0$ and $z_0$ are, respectively, the number of negative eigenvalues and the dimension of the kernel associated to the matrix
$$
\mathcal{Q}=\left(\begin{array}{llll}\langle \mathcal{D}^{-1}(1,0),(1,0)\rangle_{\mathbb{L}_{per}^2} & \langle \mathcal{D}^{-1}(0,1),(1,0)\rangle_{\mathbb{L}_{per}^2}\\\\
	 \langle \mathcal{D}^{-1}(1,0),(0,1)\rangle_{\mathbb{L}_{per}^2} & \langle \mathcal{D}^{-1}(0,1),(0,1)\rangle_{\mathbb{L}_{per}^2}\end{array}\right).
$$

\indent Next, we need to calculate all the entries of the matrices $\mathcal{P}$ and $\mathcal{Q}$ above. In fact, to calculate $\mathcal{Q}$ the procedure is the following: since $\{(1,0),(0,1)\}\subset \ker(\mathcal{D})^{\bot}$ and $\mathcal{D}:\ker(\mathcal{D})^{\bot}\rightarrow \ker(\mathcal{D})^{\bot}$ is invertible, we obtain the existence of unique $(f_1,g_1),\ (f_2,g_2)\in \mathbb{H}_{per}^2$ such that $\mathcal{D}(f_1,g_1)=(1,0)$ and $\mathcal{D}(f_2,g_2)=(0,1)$. Both cases mean, after some calculations
\begin{equation}\label{range1}
	-(5-4c^2)\partial_x^2f_1+(1-c^2)f_1-(3\phi^2+5\phi^4)f_1=1,
	\end{equation}
and
\begin{equation}\label{range2}
	-(5-4c^2)\partial_x^2f_2+(1-c^2)f_2-(3\phi^2+5\phi^4)f_2=-c.
\end{equation}
\indent Since $c\neq0$, we deduce from the uniqueness of $f_1$ that $f_2=-cf_1$. Therefore, matrix $\mathcal{Q}$ becomes 
$$
\mathcal{Q}=\left(\begin{array}{cccc}\langle f_1,1\rangle_{{L}_{per}^2} & -c\langle f_1,1\rangle_{{L}_{per}^2}\\\\
	-c\langle f_1,1\rangle_{{L}_{per}^2} & L+c^2\langle f_1,1\rangle_{{L}_{per}^2}\end{array}\right).
$$
\indent On the other hand, since matrix $\mathcal{Q}$ is a $2\times 2$ block of the matrix $\mathcal{P}$, we only need to calculate the inner products $\langle \mathcal{D}^{-1}(-c\chi,\chi),(-c\chi,\chi)\rangle_{\mathbb{L}_{per}^2}$, $\langle \mathcal{D}^{-1}(-c\chi,\chi),(1,0)\rangle_{\mathbb{L}_{per}^2}=	\langle \mathcal{D}^{-1}(1,0),(-c\chi,\chi)\rangle_{\mathbb{L}_{per}^2}$, and $\langle \mathcal{D}^{-1}(-c\chi,\chi),(0,1)\rangle_{\mathbb{L}_{per}^2}=	\langle \mathcal{D}^{-1}(0,1),(-c\chi,\chi)\rangle_{\mathbb{L}_{per}^2}$. In fact, since $\mathcal{D}^{-1}(-c\chi,\chi)=(-c4\phi''+c\phi,4\phi''-\phi)=\left(\frac{\partial\phi}{\partial c},-\phi-c\frac{\partial\phi}{\partial c}\right)$ we obtain, after some computations,

	\begin{equation}\label{dddc}
	\langle \mathcal{D}^{-1}(-c\chi,\chi),(-c\chi,\chi)\rangle_{\mathbb{L}_{per}^2}=\int_0^L \left[\phi^2+4(\phi')^2+2c\phi \frac{\partial \phi}{\partial c}+8c\phi'\frac{\partial \phi'}{\partial c}\right]dx,	
	\end{equation}

\begin{equation}\label{dddc1}
	\langle \mathcal{D}^{-1}(-c\chi,\chi),(1,0)\rangle_{\mathbb{L}_{per}^2}=\int_0^L\frac{\partial\phi}{\partial c}dx,
		\end{equation}
	
	and 
\begin{equation}\label{dddc2}
	\langle \mathcal{D}^{-1}(-c\chi,\chi),(0,1)\rangle_{\mathbb{L}_{per}^2}=-\int_0^L\phi dx-c\int_0^L\frac{\partial\phi}{\partial c}dx
\end{equation}	
	
\indent From equations \eqref{dddc}, \eqref{dddc1}, and \eqref{dddc2}, we can conclude that all three inner products mentioned above can be calculated by knowing the behavior of $\frac{\partial \phi}{\partial c}$. To this end, we need to derive equation \eqref{ddedo1} with respect to $c$ in order to obtain
$$-(5-4c^2)\left(\frac{\partial \phi}{\partial c}\right)''+(1-c^2)\frac{\partial \phi}{\partial c}-3\phi^2\frac{\partial \phi}{\partial c}-5\phi^4\frac{\partial \phi}{\partial c}=-8c\phi''+2c\phi,$$
that is,
\begin{equation}
\label{eq04}
\mathcal{L}_1\left(\frac{\partial \phi}{\partial c}\right)=-8c\phi''+2c\phi.
\end{equation}
Since the derivative of $\phi$ in terms of $c$ preserves the parity in terms of the variable $x\in [0,L]$, we obtain that  $\frac{\partial \phi}{\partial c}$ is the solution of the following IVP
\begin{equation}\left\{
	\begin{array}{lllll}
		\displaystyle \mathcal{L}_1\left(\frac{\partial \phi}{\partial c}\right)=-8c\phi''+2c\phi ,\\
	\displaystyle	\frac{\partial \phi}{\partial c}(0)=a_c, \\\\
\displaystyle	\frac{\partial \phi'}{\partial c}(0)=0.
	\end{array} \right.
	\label{edodphi}
\end{equation}	
\indent Next, we present the exact value of $a_c$ to determine numerically the function $\frac{\partial \phi}{\partial c}$ in terms of $x\in [0,L]$. As a consequence of this fact, we obtain convenient expressions for the inner products in \eqref{dddc}, \eqref{dddc1} and \eqref{dddc2}.

Let $y$ be the non-periodic solution of the homogeneous problem associated to the IVP given by \eqref{y}. Multiplying the first equation in $(\ref{edodphi})$ by $y$, we obtain after two integration by parts and using $\frac{\partial \phi'}{\partial c}(L)=\frac{\partial \phi'}{\partial c}(0)=0$, $\frac{\partial \phi}{\partial c}(L)=\frac{\partial \phi}{\partial c}(0)$ and $y'(0)=0$ that

\begin{equation}\label{eq03}
\int_0^L\mathcal{L}\left(\frac{\partial \phi}{\partial c}\right) y dx = (5-4c^2)\frac{\partial \phi}{\partial c}(0)y'(L).\end{equation}

Since $y$ is non-periodic with $y'(0)=0$ and $y(0)=y(L)$, we see that $y'(L)\neq0$ and we obtain by the first equation in $(\ref{edodphi})$, the following formula for $a_c$ given by
\begin{equation}
\label{eq05}
a_c = -\frac{2c}{(5-4c^2)y'(L)}\int_0^L\chi y dx.
\end{equation}

\indent On the other hand, to calculate the inner product $\langle f_1,1\rangle _{L_{per}^2}$ present in the matrix $\mathcal{Q}$, we need to use a similar procedure as determined to obtain the value $a_c$ in $(\ref{eq05})$. Indeed, by $(\ref{range1})$, we see that $\mathcal{L}f_1=1$. Multiplying the equality $\mathcal{L}f_1=1$ by the non-periodic solution $y$ of the IVP $(\ref{y})$, and integrating the result over the interval $[0,L]$, we obtain after two integration by parts that 
\begin{equation}\label{f10}
	f_1(0) = \frac{1}{(5-4c^2)y'(L)}\int_0^L y dx.
	\end{equation}
Therefore, $f_1$ solves the IVP 

 \begin{equation}\left\{
 	\begin{array}{lllll}
 		\displaystyle \mathcal{L}_1f_1=1 ,\\
 	 f_1(0)=\frac{1}{(5-4c^2)y'(L)}\int_0^L y dx, \\
 			f_1'(0)=0.
 	\end{array} \right.
 	\label{edof1}
 \end{equation}	
\indent Finally, we need to solve the initial value problems $(\ref{edodphi})$ and 
$(\ref{edof1})$ numerically. Important to mention that both numerical results are useful to calculate the inner products present in the entries of the matrices $\mathcal{P}$ and $\mathcal{Q}$. Our intention is to obtain, for fixed values of $L>2\pi\sqrt{\frac{\sqrt{5}}{\sqrt{5}-1}}$, the behaviour of the determinants of $\mathcal{P}$ and $\mathcal{Q}$ in terms of the wave speed $c\in \left(0,\frac{1}{2}\sqrt{5-\frac{L^4}{(L^2-4\pi^2)^2}}\right)$. We first consider the determinant of the matrix $\mathcal{Q}$. Using Mathematica program, we can solve it to conclude that the $\det(\mathcal{Q})$ is always positive according to the Figure $\ref{fig1}$. This implies that, from Lemma $\ref{lema1678}$ and equalities $(\ref{indexformula12})$ and $(\ref{indexformula123})$, we have
${\rm n}(\mathcal{D}_{\Pi})=n(\mathcal{D})-n_0-z_0=1$ and ${\rm z}(\mathcal{D}_{\Pi})=z(\mathcal{D})+z_0=1$.

\begin{figure}[htb]
	\centering
	\includegraphics[scale=1]{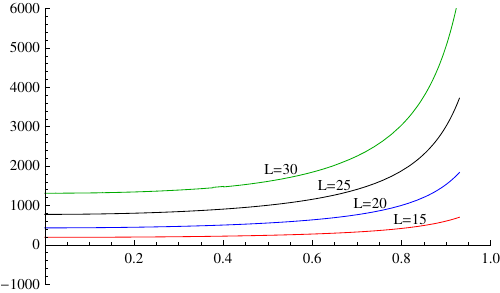}
	\caption{Plots for the function $\det(\mathcal{Q})$ in terms of $c\in\left(0,\frac{1}{2}\sqrt{5-\frac{L^4}{(L^2-4\pi^2)^2}}\right)$ for fixed values of $L>2\pi\sqrt{\frac{\sqrt{5}}{\sqrt{5}-1}}$.}
\label{fig1}\end{figure}

\indent Next picture shows the behaviour of $\det(\mathcal{P})$. According to the Figure $\ref{fig2}$, we see that for a fixed $L>0$, 
there exists an $c(L)>0$ such that if $c\in(0,c(L)]$, the periodic wave  
$\overrightarrow{\phi_c}=(\phi,-c\phi)$ is spectrally unstable, and if $c\in \left(c(L),\frac{1}{2}\sqrt{5-\frac{L^4}{(L^2-4\pi^2)^2}}\right)$, the periodic wave  
$\overrightarrow{\phi_c}=(\phi,-c\phi)$ is spectrally stable. Numerically, we can determine some values for the threshold value $c(L)>0$. In fact, if $L=15$, we obtain $c(L)\approx 0.5511026$, and if $L=20$, we obtain $c(L)\approx 0.5910113$. Finally, if $L=25$, we have $c(L)\approx 0.6100580$, and for $L=30$, we have $c(L)\approx 0.6217128$. Theorem $\ref{mainT}$ is then proved.

\begin{figure}[htb]
\centering
\includegraphics[scale=1]{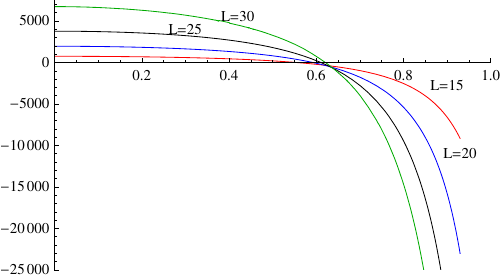}
\caption{Plots for the function $\det(\mathcal{P})$ in terms of $c\in\left(0,\frac{1}{2}\sqrt{5-\frac{L^4}{(L^2-4\pi^2)^2}}\right)$ for fixed values of $L>2\pi\sqrt{\frac{\sqrt{5}}{\sqrt{5}-1}}$. We see in all pictures that function $\det(\mathcal{P})$ has a zero depending on the value of $L$.}
\label{fig2}\end{figure}


\begin{thebibliography}{99}




\bibitem{bronski} J. Bronski, M. Johnson and T. Kapitula, \textit{An instability index theory for quadratic pencils and
	applications}, Comm. Math. Phys., 327 (2014), pp. 521--550.



\bibitem{byrd} P.F. Byrd and M.D. Friedman, Handbook of elliptic integrals
for engineers and scientists, 2nd ed., Springer, NY, (1971).


\bibitem{kapitulodeco} B. Deconinck and T. Kapitula,
\textit{On the spectral and orbital stability of spatially periodic stationary solutions of generalized Korteweg–de Vries equations}, Hamiltonian partial differential equations and applications, 285-322,
Fields Inst. Commun. 75, 2015. 






\bibitem{est} M.S.P. Eastham, The spectral theory of periodic differential equations, Scottish Academic Press, Edinburgh, (1973)



\bibitem{grillakis1} M. Grillakis, J. Shatah and W. Strauss, {\it Stability theory of solitary waves in the presence of symmetry I}.  J. Funct. Anal., 74 (1987),  pp. 160-197.

\bibitem{grillakis2} M. Grillakis, J. Shatah and W. Strauss,\textit{ Stability theory of solitary waves in the presence of symmetry II}, J. Funct. Anal. 74 (1990), pp. 308--348.

\bibitem{hakkaev1} S. Hakkaev, M. Stanislavova and A. Stefanov, \textit{Linear stability analysis for periodic traveling
	waves of the Boussinesq equation and the Klein-Gordon-Zakharov system}, Proc. Roy. Soc. Edinburgh A, 144 (2014), pp. 455--489.
	
	\bibitem{Iorio} R. Iorio and V. Iorio, \textit{Fourier Analysis and Partial Differential Equations}, Cambridge Studies in Advanced Mathematics 70, Cambridge University Press, Cambridge, 2001.


\bibitem{kapitula} T. Kapitula and K. Promislow, \textit{Spectral and Dynamical Stability of Nonlinear Waves},  Applied Mathematical Sciences 185, Springer, New York, 2013.


\bibitem{nikolai} N. Kolkovska, M. Dimova and N. Kutev, {\it  Orbital Stability of SolitaryWaves to Double Dispersion Equations with Combined Power-Type Nonlinearity}, Mathematics, 9, 1398, (2021).

\bibitem{magnus} W. Magnus and S. Winkler, Hill's equation, Wiley, New York, (1966)


\bibitem{natali1} 	F. Natali and A. Neves, Orbital stability of solitary waves, IMA J. Appl. Math., Vol. 79, 1161-1179, (2014)

	\bibitem{neves}  A. Neves, Floquet's theorem and stability of periodic solitary
waves, J. Dyn. Diff. Equat., Vol. 21,  555-565, (2009)




\bibitem{stan} M. Stanislavova and A. Stefanov, \textit{Stability analysis for traveling waves of second order in time
	PDE's}, Nonlinearity, 25 (2012), pp. 2625--2654.


\bibitem{wang} Y. Wang, C. Mu and J. Deng, \textit{Strong instability of solitary-wave solutions for a nonlinear Boussinesq equation}, Nonl Anal., 69 (2008), pp. 1599–1614.


\end{thebibliography}
\end{document}